\documentclass[12pt]{amsart}
\usepackage{amssymb,amscd,amsmath}
\usepackage[makeroom]{cancel}
\usepackage{tikz}
\usepackage{tikz-cd}
\usepackage[all]{xy} 
\usepackage{enumerate}
\usepackage{lmodern,bm}
\usepackage{hyperref}
\hypersetup{
    colorlinks=true,
    linkcolor=teal,
    urlcolor=teal,
    citecolor=teal
}
\usepackage{yfonts}
\oddsidemargin -.5cm \evensidemargin -.5cm \textwidth 17.3cm

\newtheorem{fact}{Fact}[section]

\newtheorem{theorem}[fact]{Theorem}

\newtheorem{lemma}[fact]{Lemma}
\newtheorem{corollary}[fact]{Corollary}
\newtheorem{remark}[fact]{Remark}
\newtheorem{definition}[fact]{Definition}
\newtheorem{example}[fact]{Example}

\def\Hom{{\rm Hom}}
\def\End{{\rm End}}
\def\T{{\bf T}}
\def\vt{\vartheta}
\def\vvv{{\rm v}}

\def\vme{{\scriptscriptstyle\vee}}
\def\ee{{\rm e}}
\def\iii{{\rm i}}
\DeclareMathOperator\type{{\tt type}}
\def\id{{\rm id}}

\DeclareMathOperator\Fl{\mathcal F\ell}
\DeclareMathOperator\N{{\mathbb N}}
\def\C{{\mathbb C}}
\def\Q{{\mathbb Q}}
\DeclareMathOperator\Z{{\mathbb Z}}
\DeclareMathOperator\PP{{\mathbb P}}

\DeclareMathOperator{\GL}{GL}
\def\B{{\rm B}}

\def\ulu{\diamond}
\def\Cbar{{\overline{\mathfrak C}}{}}
\def\Cc{{\mathfrak C}}
\def\Ccal{{\mathcal C}}
\def\Ep{\mathcal E\mkern-3.7mu\ell\mkern-2mu\ell}
\def\Elclas{\Ep}
\def\pa{\mathcal P}
\def\Spe{{\mathfrak S}}
\def\TL{{\mathcal T}}
\def\ii{{\rm i}}
\def\FF{F}
\def\g{{x}}
\def\li{{y}}
\def\mub{{\boldsymbol{\mu}}}
\def\xb{{\boldsymbol{x}}}
\def\AA{{\mathfrak A}}
\def\w{{\underline w}}

\title{Link patterns and elliptic Hecke algebra }

\author{Andrzej Weber}
\address{Institute of Mathematics, University of Warsaw, Poland}
\email{aweber@mimuw.edu.pl}
\thanks{The author is supported by Polish National Science Center grant number 2022/47/B/ST1/01896. The author would like to thank the Isaac Newton Institute for Mathematical Sciences, Cambridge, for support and hospitality during the programme {\it  New equivariant methods in algebraic and differential geometry} where work on this paper was partially realized. This work was supported by EPSRC grant no EP/R014604/1}

\begin{document}

\begin{abstract}
We compare three families of geometric objects: Schubert varieties in flag manifolds, matrix Schubert varieties, and Borel orbits of 2-nilpotent matrices. The first family is indexed by permutations, the second by partial permutations, and the third -- the most general -- by link patterns.

Each of the geometric objects mentioned above carries a characteristic class in equivariant elliptic cohomology, defined in the framework provided by Borisov and Libgober. We introduce a Hecke-type algebra that gives inductive formulas for computing the equivariant elliptic classes of link patterns. This requires extending the action of the Hecke algebra to menage partial permutations and link patterns. In studying the extended action, an important role is played by the associated quadratic forms and by the action of reflections on forms. Also, we analyze the specialization of equivariant elliptic classes of link patterns to the corresponding classes of Schubert varieties.
\end{abstract}

\maketitle
\section{Introduction}
Elliptic genera and related characteristic classes have been studied since the  nineteen-eighties  for smooth manifolds, particularly in connection with formal group laws. They have played an important role as tools for constructing homomorphisms from the cobordism ring to the ring of modular forms. A significant extension of this theory to singular algebraic varieties began with the work of Borisov and Libgober \cite{BoLi}.
The elliptic genus can be viewed as a deformation of Hirzebruch’s $\chi_y$-genus. It is defined only for a class of varieties with mild singularities, and the same restriction applies to the associated characteristic classes. Elliptic characteristic classes specialize to motivic Chern classes and, consequently, to Chern-Schwartz-MacPherson classes.
To apply the Borisov-Libgober construction to Schubert varieties in a flag variety, it is necessary to introduce a parameter that deforms the boundary divisor, since Schubert varieties are typically too  singular. This deformation parameter can be identified with a fractional line bundle, that is, an element of the rational Picard group.
\medskip

The idea of applying Hecke-type algebras to the computation of characteristic classes of Schubert varieties originates in \cite{AM, AMSS1, AMSS2} and was later adapted to elliptic classes in \cite{RiWe1}. More recent reformulations and developments have appeared in \cite{LZZ, ZhZh2, LZZ2025}.
Our aim is to describe the algebra governing the computation of elliptic characteristic classes of Schubert varieties and to extend these results to matrix Schubert varieties and Borel orbits of 2-nilpotent matrices. By separating purely algebraic properties from the specific features of the flag variety, we obtain a simple description of the elliptic algebra and clarify the role of its parameters.
The algebra we construct is generated by a version of divided difference operators and can be compared with the algebra of Ginzburg-Kapranov-Vasserot \cite{GKV}. It is generated by quotients of Jacobi theta functions, or more precisely, by a two-parameter elliptic function considered by Zagier \cite{Zag}. The main difference compared to \cite{GKV} is the presence of parameters in the braid relations, analogous to those appearing in the Yang-Baxter equation \cite{Fel}, \cite[\S12.1]{ChariPressley}.

\medskip
Our construction extends the methods worked out for K-theoretic classes in \cite{RuWe} and \cite{KoWe2}.
We will define elements of the equivariant elliptic cohomology of $\Hom(\C^m,\C^m)$ associated to Borel orbits of 2-nilpotent matrices.  
Since $\Hom(\C^m,\C^m)$ is contractible, its elliptic equivariant cohomology coincides with the cohomology of a point. Still we have to be careful which model of  equivariant elliptic cohomology we chose, because there are various possibilities. For our purposes it is convenient to identify the equivariant elliptic cohomology of a point with a subalgebra of meromorphic functions generated by the theta functions. This allows both: to perform concrete computations and view elements of equivartiat elliptic cohomology as sections of sheaves over a product of elliptic curves, as in \cite[\S2]{Ganter}. The parameters $\mu$ and $h$ correspond to additional factors in the product of the elliptic curves, and they have no easy topological explanation. On the other hand the additional parameters are indispensable from the point of view of stable envelopes and related mirror duality, as it is apparent from \cite{AgOk, RTV, RiWe1, RiWe2}.   

\medskip

The set of 2-nilpotent matrices contains an important subset, consisting of upper-triangular block $(n\!\times\!n)$--matrices (when  $m=2n)$. This allows considering matrix Schubert varieties as a special case of Borel orbits of 2-nilpotent matrices. 
Our elliptic classes specialize to the K-theoretic twisted motivic Chern classes of the matrix Schubert varieties defined in \cite{KoWe2}. Moreover, the elliptic classes, after a normalization (essentially the same as  in \cite{RTV} applied to weight functions) become  the elliptic classes of the Schubert varieties, which are the elliptic stable envelopes for maximal torus action, in the sense of \cite{AgOk}.
\medskip

In our calculus we implicitly apply localization theorem for the torus action. Thus, the localized  equivariant elliptic cohomology is the home of our calculus.
An element of the localized equivariant cohomology of $\Hom(\C^m,\C^m)$ is a section of a line bundle over the product of
${m+1}$ copies of an elliptic curve  and depends additionally on so-called  dynamical parameters and a free variable $h$. All together is considered as a section of a line bundle over a bigger product of elliptic curves. We might formally take a direct sum over all relevant vector bundles, as in \cite{LZZ}, but we do not need it, since we only consider pure elements -- sections of individual line bundles.
\medskip

The main goal of this paper is to show that geometric considerations arising from the study of Schubert varieties naturally lead to the construction of a Hecke algebra with parameters. We restrict throughout to the case of the Lie type $A$. 
We compute the Borisov-Libgober elliptic classes of the 2-nilpotent Borel orbits in $\Hom(\C^m,\C^m)$. 

\begin{theorem}[Summary] Let's fix integers $m$ and $r$. 
\begin{itemize}
\item We define in section \S\ref{bundletype} a subalgebra of meromorphic functions in $m+r+2$ variables, denoted by $\AA_{m,r}$. 
\item The algebra $\AA_{m,r}$ is equipped with a { grading} defined by the integral quadratic forms in $m+r+2$ variables. The homogeneous elements are called  {\it pure functions}. 
\item We define operations $\Cc^\mu_i$, $i=1,2,\dots m-1$ depending on the parameters $\mu\in (\C^*)^{r}$. The operations satisfy { braid relations with parameters \eqref{ellbraid} and a} flip relation \eqref{flip}.  
\item For a pure element of  $f\in\AA_{m,r}$ and  $i=1,2,\dots m-1$ there { exists} a unique parameter $\mu$, such that $\Cc^\mu_i(f)$ is pure.
\item The action of $\Cc_i^\mu$ permutes the Borisov-Libgober equivariant elliptic classes of 2-nilpotent upper-triangular Borel orbits in $\Hom(\C^m,\C^m)$.
\item The construction extends to elliptic classes of 2-nilpotent Borel orbits, not necessarily upper-triangular, thanks to the extension of the torus by an extra $\C^*$ factor.
\end{itemize}
\end{theorem}

\medskip
In fact, a version of elliptic Demazure-type operators was introduced in \cite{RiWe1}. There, we discussed the braid relation in \S9, while the {\it flip relation} \eqref{flip} remained implicit. This flip relation has a clear geometric origin, and we provide a geometric proof of it in \S\ref{basicexample}.
The resulting collection of operators acting on meromorphic functions can be viewed as generating an algebra, and one could, in principle, assign a name to this algebra. However, we do not pursue this direction here, as our primary focus is on the action of specific operators on { concrete  {\it pure functions}}.
The resulting action of Demazure operators leads to a unification of the following:
\begin{itemize}
\item elliptic weight functions of \cite{RTV},
\item elliptic classes of Schubert varieties, \cite{RiWe1}
\item twisted motivic Chern class, \cite{KoWe2},
\item CSM and motivic Chern classes of upper-triangular square-zero B-orbits, \cite{RuWe}.
\end{itemize}
\medskip
Embedding $\End(\C^n)$ into $\End(\C^{2n})$ as upper left corner matrices allows to consider matrix Schubert varieties as a special case of 2-nilpotent $\B$-orbits.  We obtain a family of elliptic  functions, which satisfy two recursions related to left and right Demazure-Lusztig operations (or equivalently R-recursion and Bott-Samelson recursion). 
This uniform point of view sheds a light on duality proven in \cite{RiWe2}.
\medskip

We emphasize that the aim of this paper is not to introduce a new version of elliptic algebra. Rather, the main point is the observation that geometric objects, together with the operations on their elliptic cohomology, naturally give rise to an algebraic structure that closely resembles algebras previously appearing in the literature.

\medskip

I would like to thank Jakub Koncki for careful and critical reading of the first version of the paper and providing helpful suggestions.
I would like to thank the reviewer for the valuable and insightful comments that helped improve the clarity and organization of the paper.

\tableofcontents

\section{Background}
One of the basic themes of Schubert calculus is to link homological invariants of Schubert varieties with characteristic classes of tautological bundles. The best known example of such connection is the inductive procedure of computing fundamental classes applying divided differences. The Schubert varieties in the complete flag variety $\Fl_n=\GL_n/\B_n$ are indexed by permutations $w\in\Spe_n$: the Schubert variety $X_w$ is the closure of the $\B_n$-orbit of the permutation matrix $M_w$. Here $\B_n$ denotes the Borel subgroup of $\GL_n$, which we choose to be the group of invertible, upper-triangular matrices. The cohomology ring of $\Fl_n$ is generated by the first Chern classes of the tautological bundles. Denote by $\kappa$  \begin{equation*}\kappa:\Z[\li_1,\li_2,\dots,\li_n]\twoheadrightarrow H^*(\Fl_n)\end{equation*} the corresponding surjection. Bernstein, Gelfand and Gelfand \cite{BGG} have applied the divided difference operators acting on polynomials 
\begin{equation*}\label{partial}(\partial _if)(\li_1,\li_2,\dots,\li_n)=\frac{f(\li_1,\dots,\li_i,\li_{i+1},\dots,\li_n)-f(\li_1,\dots, \li_{i+1},\li_i,\dots,\li_n)}{\li_i-\li_{i+1}}\end{equation*}
to compute inductively the fundamental classes $[X_w]$. If a polynomial $f_w$ represents $[X_w]$, i.e., 
$$\kappa(f)=[X_w]\,,$$
then
\begin{equation}\label{BGG}\kappa(\partial_w(f))=\begin{cases}[X_{ws_i}]&\text{ if }\dim(X_{ws_i})>\dim(X_w)\\
0&\text{ if }\dim(X_{ws_i})<\dim(X_w)\end{cases}\,.\end{equation}
Here $s_i$ is the elementary transposition $(i,i+1)$ for $0<i<n$. The operations $\partial_i$ generate an algebra, called nil-Hecke algebra. The following relations are satisfied
$$\begin{matrix}\partial_i\partial_j=\partial_j\partial_i,& \text{if }|i-j|>1\hfill\,,\\ \\
\partial_i\partial_j\partial_i=\partial_j\partial_i\partial_j,&\text{if }|i-j|=1\hfill&\text{(braid relation)},\\ \\ 
\partial_i^2=0,&&\text{(quadratic relation)}.\end{matrix}$$
The action of the operations $\partial_i$ can serve to compute fundamental classes in the torus--equivariant cohomology $H^*_\T(\Fl_n)$. The equivariant cohomology admits the Borel presentation
\begin{equation}\label{Kirwan}\kappa:\Z[\g_1,\g_2,\dots,\g_n,\li_1,\li_2,\dots \li_n]\twoheadrightarrow H^*_\T(\Fl_n)\simeq \Z[\g_1,\g_2,\dots,\g_n]\otimes_{S_n}\Z[\li_1,\li_2,\dots \li_n]\,,\end{equation}
where $S_n$ is the ring of symmetric polynomials in $n$ variables. The $\g$-variables, called the equivariant variables, are generators of $H^*_\T(pt)$. The divided differences act on both sets of variables. The operations  acting on $\g$-variables are denoted by $\partial_i^\g$ and the operations  acting on $\li$-variables are denoted by $\partial_i^\li$. In equivariant cohomology the formula \eqref{BGG} is satisfied for the operations $\partial_i^\li$ while for the divided difference acting on $\g$-variables we have 
\begin{equation}\label{LBGG}\kappa(\partial^\g(f_w))=\begin{cases}-[X_{s_iw}]&\text{ if }\dim(X_{s_iw})>\dim(X_w)\\
0&\text{ if }\dim(X_{s_iw})<\dim(X_w)\end{cases}\,,\end{equation}
see \cite[Theorem 1.1]{IkMiNa}.
The question arises: is there a geometric interpretation of the operations $\partial^\g_i$ and $\partial^\li_i$ acting on polynomials, not passing to the quotient algebra? It turns out that
with a suitable choice of the starting point the divided difference operations compute the fundamental classes of the matrix Schubert variety, which is the closure of the orbit  $\B_nM_w\B_n\subset \End(\C^n)$, see \cite{FeRi}. The equivariant cohomology
$$H^*_{\T\times\T}(\End(\C^n))\simeq H^*_{\T\times\T}(pt)\simeq \Z[\g_1,\g_2,\dots,\g_n,\li_1,\li_2,\dots \li_n]$$ and the operations $\partial^\li_i$, $\partial^\g_i$ reflect certain geometric operations on matrices.
Further we mix the $\g$-variables with $\li$-variables setting $\g_{i+n}=\li_i$ for $i=1,2,\dots n$. We obtain a nil-Hecke algebra with  generators $\partial^\g_i$ for $i=1,2,\dots,2n-1$. Its geometric meaning was unveiled  in \cite{KnZJ1, KnZJ2} and \cite{RuWe}. We  describe this construction in the next two sections.

\section{Combinatorics governing geometry}
We embed $\End(\C^n)$ into $\End(\C^{2n})$
$$\iota:A\mapsto \begin{pmatrix}0&A\\0&0\end{pmatrix}\,.$$
The $2n\times 2n$ matrix $\iota(A)$ is 2-nilpotent, i.e.~$\iota(A)^2=0$. We observe that the image  of the matrix Schubert cell $\B_nM_w\B_n$ is equal to the $\B_{2n}$ orbit of $\iota(A)$ with $\B_{2n}$ acting by conjugation. Therefore, instead of $\B_n\times \B_n$ orbits in $\End(\C^n)$ we study $\B_{m}$ orbits in the set of 2-nilpotent matrices 
$$\left\{N\in \End(\C^m)\;|\; N^2=0\right\}\,.$$
In general, we do not assume that $m$ is even.
By \cite{BoRe},
 \cite[Th.~7.3.1]{BePe} there are finitely many $\B_m$ orbits. Each orbit contains exactly one matrix of a particular shape: there is at most one   nonzero entry (and it is normalized to 1) at each column and row.   
 We represent the orbits by link patterns. 
 \begin{definition} We fix an integers $r\geq0$ and $m\geq2r$. A  link pattern of rank $r$ is a set of pairs $\{(a_1,b_1),(a_2,b_2),\dots,(a_r,b_r)\}$, such that $a_i, b_i\in\{1,2,\dots,m\}$ are all different numbers. \end{definition}
 We read the link pattern  $\{(7,1),(8,2)\}$  as $7\mapsto 1$, $8\mapsto 2$:
 \vskip13pt

$$
\pa^{\min}_{8,2}=\xymatrix@-1pc{
1\ar@{-}[r]\ar@{<-}@/^2.5pc/[rrrrrr]&
2\ar@{-}[r]\ar@{<-}@/^2.5pc/[rrrrrr]&
3\ar@{-}[r]&
4\ar@{-}[r]&
5\ar@{-}[r]&
6\ar@{-}[r]&
7\ar@{-}[r]&
8}\,.$$
It represents the orbit of the matrix 
$$N_{8,2}^{\min}=\hbox{\arraycolsep=1.7pt\def\arraystretch{0.8}\tiny$\left[\begin{array}{cccccccc}
0&0&0&0&0&0&\bf 1&0\\
0&0&0&0&0&0&0&\bf 1 \\
0&0&0&0&0&0&0&0 \\
0&0&0&0&0&0&0&0\\
0& 0&0&0&0&0&0&0\\        
0&0&0&0&0&0&0&0\\
0&0&0&0&0&0&0&0\\
0&0&0&0&0&0&0&0
\end{array}
\right]$}
$$
This orbit has minimal dimension among $\B_8$ orbits in $\End(\C^8)$ of 2-nilpotent matrices of rank 2.
The link pattern $\{(2,5),(7,4)\}$
\vskip15pt
$$
\pa=\xymatrix@-1pc{
1\ar@{-}[r]&
2\ar@{-}[r]\ar@{->}@/^2.5pc/[rrr]&
3\ar@{-}[r]&
4\ar@{-}[r]\ar@{<-}@/^2.5pc/[rrr]&
5\ar@{-}[r]&
6\ar@{-}[r]&
7\ar@{-}[r]&
8}$$
represents the orbit of the  matrix
$$M_w N^{\min}_{8,2}M_w^{-1}=\hbox{\arraycolsep=1.7pt\def\arraystretch{0.8}\tiny$\left[\begin{array}{cccccccc}
0&0&0&0&0&0&0&0\\
0&0&0&0&0&0&0&0 \\
0&0&0&0&0&0&0&0 \\
0&0&0&0&0&0&\bf 1&0\\
0&\bf 1&0&0&0&0&0&0\\        
0&0&0&0&0&0&0&0\\
0&0&0&0&0&0&0&0\\
0&0&0&0&0&0&0&0
\end{array}
\right]$}$$
 for some $w\in \Spe_8\,.$
In what follows we will denote the conjugation action $M_wN\,M_w^{-1}$ by $w\cdot N$.
\medskip

We summarize the connection between geometry and combinatorics with the following table 
$$\begin{matrix}\rm Permutations\\[20pt] \xymatrix@-1.4pc{
_\bullet\ar@{--}[r]\ar@{<-}@/^2pc/[rrrrrrrrrrr] &
_\bullet\ar@{--}[r]\ar@{<-}@/^2pc/[rrrrrrrrrrr] &
_\bullet\ar@{--}[r]\ar@{<-}@/^1.8pc/[rrrrrrrr] &
_\bullet\ar@{--}[r]\ar@{<-}@/^1.6pc/[rrrrr] &
_\bullet\ar@{--}[r]\ar@{<-}@/^1.4pc/[rrr] &
_\bullet\ar@{--}[r]\ar@{<-}@/^1.8pc/[rrrrrrrr] &_\bullet\ar@{<-}@/^1.3pc/[rrr] &_\circ\ar@{--}[r] &
_\circ\ar@{--}[r] &
_\circ\ar@{--}[r] &
_\circ\ar@{--}[r] &
_\circ\ar@{--}[r] &
_\circ\ar@{--}[r] &
_\circ
}\end{matrix}
\longleftarrow\hskip-5pt\longrightarrow\begin{pmatrix}\text{Schubert varieties:}\\ \text{ $\B_n$-orbits in $\GL_n/\B_n$ or}\\
\text{$(\B_n\smash\times\smash \B_n)$-orbits in $GL_n$}\end{pmatrix}$$

$$\begin{matrix}\rm Partial ~~permutations\\[20pt]
\xymatrix@-1.4pc{
_\bullet\ar@{--}[r] &
_\bullet\ar@{--}[r]\ar@{<-}@/^2pc/[rrrrrrrrrrr] &
_\bullet\ar@{--}[r]\ar@{<-}@/^1.8pc/[rrrrrrrr] &
_\bullet\ar@{--}[r] &
_\bullet\ar@{--}[r]\ar@{<-}@/^1.4pc/[rrr] &
_\bullet\ar@{--}[r] &_\bullet\ar@{<-}@/^1.3pc/[rrr] &
_\circ\ar@{--}[r] &
_\circ\ar@{--}[r] &
_\circ\ar@{--}[r] &
_\circ\ar@{--}[r] &
_\circ\ar@{--}[r] &
_\circ\ar@{--}[r] &
_\circ
}\end{matrix}
\longleftarrow\hskip-5pt\longrightarrow\begin{pmatrix}\text{matrix Schubert varieties:}\\ \text{ $(\B_n\smash\times\smash \B_n)$-orbits in $\End(\C^n)$}\end{pmatrix}$$

$$\begin{matrix}\rm
 Directed~~link~~patterns\\[20pt]
\xymatrix@-1.4pc{
_\bullet\ar@{--}[r]\ar@{<-}@/^2pc/[r] &
_\bullet\ar@{--}[r] &
_\bullet\ar@{--}[r]\ar@{<-}@/^1.8pc/[rrrrrrrr] &
_\bullet\ar@{--}[r] &
_\bullet\ar@{--}[r]\ar@{<-}@/^1.4pc/[rrr] &
_\bullet\ar@{--}[r] &_\bullet\ar@{--}[r]
\ar@{<-}@/^1.3pc/[rrr] &_\bullet\ar@{--}[r] &
_\bullet\ar@{--}[r]\ar@{<-}@/^1.3pc/[rrrr] &
_\bullet\ar@{--}[r] &
_\bullet\ar@{--}[r] &
_\bullet\ar@{--}[r] &
_\bullet\ar@{--}[r] &_\bullet
}
\end{matrix}
\longleftarrow\hskip-5pt\longrightarrow\begin{pmatrix}\text{$\B_m$-orbits of 2-nilpotent}\\ \text{upper-triangular $m\smash\times\smash m$ matrices}\end{pmatrix}$$

$$\begin{matrix}\rm
 Link~~patterns\\[20pt]
 \xymatrix@-1.4pc{
_\bullet\ar@{--}[r]\ar@{->}@/^2pc/[r] &
_\bullet\ar@{--}[r] &
_\bullet\ar@{--}[r]\ar@{<-}@/^1.8pc/[rrrrrrrr] &
_\bullet\ar@{--}[r] &
_\bullet\ar@{--}[r]\ar@{->}@/^1.4pc/[rrr] &
_\bullet\ar@{--}[r] &_\bullet\ar@{--}[r]
\ar@{<-}@/^1.3pc/[rrr] &_\bullet\ar@{--}[r] &
_\bullet\ar@{--}[r]\ar@{<-}@/^1.3pc/[rrrr] &
_\bullet\ar@{--}[r] &
_\bullet\ar@{--}[r] &
_\bullet\ar@{--}[r] &
_\bullet\ar@{--}[r] &_\bullet
}
\end{matrix}
\longleftarrow\hskip-5pt\longrightarrow\begin{pmatrix}\text{$\B_m$-orbits  of 2-nilpotent }\\ \text{$m\smash\times\smash m$ matrices }\end{pmatrix}$$

We need link patterns with labelled arrows hence we consider sequences of pairs (source, target), instead of sets of pairs. 
Formally we consider injective functions from
$\{1,2,\dots,r\}$ to the set  of pairs:
\begin{definition} We fix an integers $r\geq0$ and $m\geq2r$. A labelled link pattern of rank $r$ is a sequence of pairs $\pa=((a_1,b_1),(a_2,b_2),\dots,(a_r,b_r))$, such that $a_i, b_i\in\{1,2,\dots,m\}$ are all different numbers. \end{definition}

Clearly, the product of permutation groups $\Spe_r\times \Spe_m$ acts on the set of all functions
$$\{1,2,\dots,r\}\longrightarrow \{1,2,\dots,m\}^2\,,$$ preserving those, which represent link patterns.
The permutations of the set $\{1,2,\dots,r\}$ will be denoted with the superscript $\mu$.

\section{Characteristic classes of 2-nilpotent orbits}\label{tabela}
The study of homological invariants of $\B$-orbits in 2-nilpotent matrices was initiated by \cite{KnZJ1,KnZJ2}, where homology and K-theory classes were presented as an effect of an action of certain algebras. Further results of \cite{RuWe} apply to more sophisticated invariants: Chern-Schwartz-MacPherson classes and motivic Chern classes of \cite{BSY}.  The table below summarizes the Demazure-type operations in the convention
used in \cite{RuWe}:
\begin{equation*}\begin{matrix}
\text{homology fundamental class}&\beta^{\rm coh}_if=\frac1
{x_{i+1}-{x_i}}f+
\frac{1}{x_{i}-x_{i+1}}s_if\,,&(\beta_i^{\rm coh})^2=0\\[10pt]
\text{K-theory fundamental class}&\beta_if=\frac1
{1-e^{x_{i+1}-x_i}}f+
\frac1{1-e^{x_{i}-x_{i+1}}}s_if\,,&\beta_i^2=\beta_i\\[15pt]
\text{CSM classes}&
\TL^{\rm coh}_i(f)=\frac1
{x_{i+1}-x_i}f+
\frac{1+x_{i}-x_{i+1}}{x_{i}-x_{i+1}} s_if\,,
&(\TL^{\rm coh})^2=id\\[15pt]
\text{motivic Chern classes}&
\TL_i(f)=\frac{(1+y)e^{x_{i+1}-x_i}}
{1-e^{x_{i+1}-x_i}}f+
\frac{1+y\,e^{x_{i}-x_{i+1}}}{1-e^{x_{i}-x_{i+1}}} s_if\,,
&
(\TL_i+id)(\TL_i+y \,id)=0.
\end{matrix}\end{equation*}
Note that we have corrected the sign in the formula  \eqref{partial}
$$\beta^{\rm coh}_i=-\partial_i\,.$$
The above operations act on the corresponding classes of $\B$-orbits of 2-nilpotent matrices belonging to $H^*_\T(\End(\C^m))$,  
$K_\T(\End(\C^m))$ or $K_\T(\End(\C^m))[y]$. In the next section we introduce the elliptic functions on which the calculus of elliptic classes is based.

\section{Theta, $\FF$ and $\delta$ functions}\label{delata}
Let us fix notation and describe the basic properties of the elliptic functions which we will use.
We start with the 
Jacobi theta function. For $x,\, \tau\in \C$,  ${\rm im}(\tau)>0$
Let $q=e^{2\pi\ii\tau}$.
\begin{equation*}\label{Jacobi-prod-for}
\theta_\tau(x)=2q^{\frac18} \sin (\pi x)
\prod_{n=1}^{\infty}(1-q^{n})
 \big(1-q^{n} e^{2\pi\ii x}\big)\big(1-q^{n}/ e^{2\pi\ii x}\big)\,.
\end{equation*}
Our theta function differs from the classical one by a constant factor, which will cancel out in further considerations. Moreover, in some sources the argument $x$ is re-scaled. We have quasi-periodicity relations 
$$\theta_\tau(x+1)=-\theta_\tau(x)\,,$$
\begin{equation*}\label{deltatau0}\theta_\tau(x+\tau)=-q^{-1/2}\, e^{-2\pi\ii x}\, \theta_\tau(x)\,.\end{equation*}
\medskip

The function $\FF_\tau(x,y)$ is defined by the formula
$$\FF_\tau(x,y)=\frac{\theta'_\tau(0)\,\theta_\tau(x+y)}{\theta_\tau(x)\,\theta_\tau(y)}$$
 It is a meromorphic function on $\C^2$. The argument $\tau$ is treated as a parameter. Obviously
 $$\FF_\tau(x,y)=\FF_\tau(y,x)$$
 and
\begin{equation}\label{antysymetria}\FF_\tau(-x,-y)=-\FF_\tau(x,y)\,,\end{equation}
The quasi-periodicity relations take the following form:
\begin{equation*}\label{per1}\FF_\tau(x+1,y)=\FF_\tau(x,y)\,,\end{equation*}
\begin{equation*}\label{per2}\FF_\tau(x+\tau,y)=e^{-2\pi\ii y}\, \FF_\tau(x,y)\,.\end{equation*}
 We will use the function $\delta$ satisfying
\begin{equation}\label{esponential}\FF_\tau(x,y)=\delta_\tau(\ee^{2\pi ix},\ee^{2\pi iy})\,.\end{equation}
since $\FF_\tau$ descends to a function on $\C^*\times\C^*$.
The function $\delta_\tau$ depends also on $q=e^{2\pi \iii \tau}$, but we keep it fixed and do not indicate $\tau$ in the notation. We can treat $\delta$ as an element of
$\Z(a,b)[[q]]$. In fact except from the coefficient of $q^0$ the remaining coefficients of the $q$--expansion are Laurent polynomials in $a$ and $b$. 
The expansion is of the form 
\begin{align}\label{expansion}\delta(x,y)&=\frac{1-x^{-1}y^{-1}}{(1-x^{-1})(1-y^{-1})}+q(x^{-1}y^{-1}- xy)+q^2(x^{-2}y^{-1}+x^{-1}y^{-2}- x^2y- xy^2)+\dots\\
&=\frac{1-x^{-1}y^{-1}}{(1-x^{-1})(1-y^{-1})}+\sum_{n=1}^\infty q^n\sum_{k\ell=n}(x^{-k}y^{-\ell}-x^ky^\ell)\,,\end{align}
see \cite[\S3]{Zag}.
The properties  of the function $\delta_\tau$ are described in \cite{MiWe}.  
The modular properties (with respect to the transformation $\tau\mapsto -1/\tau$) will not play a significant role in the rest of the paper.

\section{The algebra generated by $\delta$-functions and  { grading} by quadratic forms}
\label{bundletype}
Let $\tau\in \C$ with $\operatorname{Im}(\tau)>0$, and let $E_\tau=\C/\langle 1,\tau\rangle$ be the corresponding elliptic curve. Fix natural numbers $m,r\in \N$. We consider the subalgebra of meromorphic functions on $\C^{m+1}\times \C^{r+1}$ generated by the functions
$$
F(x,y)
\quad\text{for}\quad \xb \in \C^{m+1},\quad \mub\in \C^{r+1}\,.
$$
We will use exponential variables since the function $F$ admits the presentation \eqref{esponential}.   The expansion \eqref{expansion} has integral coefficients, but we do not exploit here the integral structure.
\medskip

Throughout the paper, the symbol $\delta(\xb,\mub)$ will be treated mostly formally. The relations required for our purposes are precisely those given in \S\ref{dowodwarkocza}, namely those encapsulated in Theorem \ref{identities} (the relation \eqref{fourterm}, the following blow-up and quadratic relations) as well as the flip relation \eqref{flipkonkretnie}. More precisely, we consider the algebra generated by symbols of the form $\delta(\ell_1(\xb),\ell_2(\mub))$, where $\xb=(u,x_1,\dots,x_m)\in (\C^*)^{m+1}$, $\mub=(h,\mu_1,\dots,\mu_r)\in (\C^*)^{r+1}$, and $\ell_1$, $\ell_2$ are integral linear forms on $\C^{m+1}$ and $\C^{r+1}$, respectively, written multiplicatively. We shall systematically use multiplicative notation for both $\xb$ and $\mub$, since no ambiguity will arise from this convention.

\begin{definition}\label{defAA} The subalgebra of the algebra of the meromorphic functions on 
$$(\tau,\xb,\mub)\in\C_{{\rm Im}(z)>0}\times(\C^*)^{m+1}\times(\C^*)^{r+1}$$ 
generated by the functions $\delta(\ell_1(\xb),\ell_2(\mub))=\delta_\tau(\ell_1(\xb),\ell_2(\mub))$ will be denoted
by $\AA_{m,r}$. 
\end{definition}

A definition of the Demazure-type operators acting on $\AA_{m,r}$, motivated by geometry, will be given in \S\ref{defDemazure}.
The operators $\Cc_i^\mu$ depend on the parameter $\mu=(\mu_1,\mu_2,\dots,\mu_r)$.
Although values $\Cc_i^\mu(f)$ are well-defined for every $f\in \AA_{m,r}$, $\mu\not\equiv 1\mod q^{\mathbb Z}$, they have a geometric significance when $f$ is {\it pure} and $\mu$ is related to the type of $f$. The notion of purity is defined below.

\medskip

Meromorphic functions on $\C^{m+r+2}$ can be interpreted as sections of  line bundles on a product of elliptic curves, provided that they enjoy certain quasi-periodic relations. 
The theta function $\theta_\tau$ defines a global section of a line bundle on $E_\tau$ whose vanishing locus is precisely the origin.
The algebra $\AA_{m,r}$ may be naturally viewed as a subspace of
$$
\bigoplus_{L\in {\rm Pic}(E_\tau^{m+r+2})} H^0(L)
$$
contained in the space spanned by the sections which correspond to Laurent polynomials  in theta functions  in integral combination of variables.

\medskip

We recall the correspondence between quasi-periodic functions and sections of a bundle.
Let $N=m+r+2$. Suppose $Q$ is a symmetric matrix   $N\times N$ with integer entries. It defines a line bundle over the $N$-fold product of a fixed elliptic curve $E=\C/\langle 1,\tau\rangle$. This bundle is the quotient of $\C^N$ by an action of $\Z^N\oplus\Z^N$. The action is defined by
$$(k,\ell)\cdot (z,v)=\left(z+k+\ell\tau\;,\;(-1)^{\langle k, k\rangle}(-q^{1/2})^{\langle\ell, \ell\rangle}e^{-2\pi\iii {\langle\ell , z\rangle}}v\right)$$
Here the product $\langle a,b\rangle=a^TQb\in \mathbb Z$ is defined for integral vectors  and extended linearly to $\C^N$.
According to our convention  $q^{1/2}=\ee^{\pi \iii\tau}$.
With this interpretation the theta function is a section of the line bundle over $E$ associated to the  $1\times 1$ matrix $[\,1\,]$.  
Quasi-periodic functions are the functions defined on $\C^N$ and descend to sections of the line bundle defined above.

\medskip

Instead of  symmetric matrices $Q$ it is more convenient to perform calculus of quadratic forms. The matrix $[\,1\,]$ corresponds to the quadratic form $\frac12x^2$. We will say that $\theta(x)$ is of the type $\frac12x^2$ and write 
$$\type(\theta(x))=\tfrac12 x^2\,.$$
Products of functions result in addition of quadratic forms and quotients give differences. For example the function $\FF(x,y)= \frac{\theta'(0)\theta(x+y)}{\theta(x)\theta(y)}$ is a section of the bundle associated to the quadratic form $$\tfrac12\big((x+y)^2-x^2-y^2\big)=xy\,.$$
We write $\type(\delta(x,y))=xy$. 
\medskip

\begin{definition}\label{defpure}We  say that a Laurent polynomials in theta functions applied to integral combination of variables is {\it pure}  if all summands have the same type. A pure function defines a  section of the associated line bundle.
\end{definition}

\medskip

{\bf Important convention:} When a combination of variables appears as an argument of the $\delta$ function we use the multiplicative notation. This is to save space and to agree with the literature. 
In formulas involving quadratic forms we use additive notation. 
Multiplication of arguments of $\delta$ corresponds to addition of variables in types, for example
$$\type\Big(\delta\big(\frac{x_{i+1}}{x_i},\frac{\mu_{i+1}}{\mu_i}\big)\Big)=(x_{i+1}-x_i)(\mu_{i+1}-\mu_i)\,.$$

\begin{example}\rm 
(compare Example \ref{Schred}) The function
$$\delta (\tfrac{x_1}{x_2},h)
   \delta
   (\tfrac{x_2}{y_2},h)
   \delta (\tfrac{x_2}{y_1},\mu
   _1) \delta
   (\tfrac{x_1}{y_2},\mu
   _2)+\delta
   (\tfrac{x_2}{x_1},\tfrac{\mu
   _1}{\mu _2}) \delta
   (\tfrac{x_1}{y_2},h)
   \delta (\tfrac{x_1}{y_1},\mu
   _1) \delta
   (\tfrac{x_2}{y_2},\mu _2)
$$
is pure. Its type is equal to
\begin{multline*} \left(x_1-x_2\right)h+
   \left(x_2-y_2\right)h+
   \left(x_2-y_1\right)\mu _1+
   \left(x_1-y_2\right)\mu _2=\\
   \left(x_2-x_1\right)\left(\mu _1-\mu _2\right)+
    \left(x_1-y_2\right)h+
  \left(x_1-y_1\right)\mu _1+
   \left(x_2-y_2\right)\mu _1\,.
\end{multline*}
Therefore, the sum makes sense when interpreted as sections of the same line bundle.
\end{example}
The pure functions belonging to the algebra $\AA_{m,r}$ have very special quadratic forms: they are linear in $\xb$-variables and linear in $\mub$ variables, since $\AA_{m,r}$ is generated by the functions $\delta(\ell(\xb),\ell'(\mub))$, where $\ell$ and $\ell'$ are linear functionals:
$$\type\!\Big(\smash\prod_{i=1}^k\delta(\ell_i(\xb),\ell_i'(\mub))\Big)=\sum_{i=1}^k\ell_i(\xb)\ell_i'(\mub)\,.$$

\begin{remark}\rm The analysis of quadratic forms plays a crucial role in the work on {\it bow varieries} and their stable envelopes, \cite[\S3.3]{BottaRi}.\end{remark}

\section{Elliptic characteristic classes of Schubert varieties }\label{dlaGbyB}
The torus equivariant K-theory of the flag variety $\Fl_n=\GL_n(\C)/\B_n$ admits the Borel presentation 
\begin{equation}\label{K-flag}K_\T(\Fl_n)\simeq{ R(\T)}\otimes_{R(\T)^W}R(\T)\,,\end{equation}
where $R(\T)$ stands for the representation ring, which is identified with $$K_\T(pt)=\Z[x_1^{\pm1},x_2^{\pm1},\dots,x_n^{\pm1}]\,.$$
The variables $x_i$ are called equivariant variables, and in the presentation \eqref{K-flag} they correspond to the first factor. The variables of the second factor are denoted by $y_i$ and by the isomorphism \eqref{K-flag} they are sent to the classes of the tautological line bundles. { We} call them topological variables. 
For the definition of elliptic classes we need also to consider a family of boundary conditiotions depending on line bundles.
In \cite{RiWe1} we have decided to work with the following model of elliptic equivariant cohomology extended by the additional parameters:
$$\text{Meromorphic maps}:\mathbb H\times \T^\vee\times \C^*\longrightarrow K_\T(\Fl_n)\otimes\C\,.$$
Here $\mathbb H$ was the upper half-plane parametrizing elliptic curves, the dual torus $\T^\vee$ with variables $\mu_i$ was parametrizing the boundary conditions and $h\in\C^*$ was an additional parameter. Therefore,
the elliptic classes of Schubert varieties in the complete flag variety  are expressions in:

\begin{itemize}
\item $\g_i$, $i=1,2,\dots n$ --- equivariant variables,   coming from the coefficient ring, i.e.~the equivariant elliptic cohomology of a point. The variables $\g_i$ are characters of the maximal torus in $\T\subset \GL_n(\C)$. 
\item $\li_i$, $i=1,2,\dots n$ --- the classes of the tautological line bundles. The line bundles on the complete flag variety
correspond to characters as well, hence $\li_i$ is another set of basis characters of the maximal torus.
\item $\mu_i$, $i=1,2,\dots n$ --- 
the  functionals on $\T^\vee\simeq {\rm Pic}(\Fl_n)\otimes\C/\Z$, thus, can be understood as cocharacters. 
\item $h$ --- an auxiliary  variable.
\end{itemize}

Our construction allows considering combinations of the variables $\mu_i$ with complex coefficients although in the original definition of the boundary divisor, given by Borisov and Libgober \cite{BoLi}, the coefficients were rational.
The corresponding Demazure-type operators have a parameter built in. The localized\footnote{The quotients of the elliptic class by the Euler classes of the tangent bundle.}  elliptic classes 
are subject to the following relations 
\begin{enumerate}[(i)]
\item Bott-Samelson recursion: if $\ell(ws_i)>\ell(w)$ 
\begin{equation}\label{BSrecursion}
\Elclas(X_{w s_i }) =
\delta( \tfrac{\li_{i+1}}{\li_i},\tfrac{\mu_{i+1}}{\mu_i})\cdot s_i^\mu\Elclas(X_{w})  
+
\delta(\tfrac{\li_{i+1}}{\li_i},h)\cdot s_i^\li s_i^\mu\Elclas(X_{w}).
\end{equation}
\item R-matrix recursion: if $\ell(s_iw)>\ell(w)$
\begin{equation}\label{Rrecursion}
\Elclas(X_{s_i w}) =
\delta(\tfrac{\g_{i+1}}{\g_{i}},\tfrac{\mu_{w^{-1}(i+1)}}{\mu_{w^{-1}(i)}}) \cdot\Elclas(X_{w}) 
+
\delta(\tfrac{\g_i}{\g_{t+1}},h)\cdot s_i^\g \Elclas(X_w)
\,,\end{equation}
\end{enumerate}
Here the transposition $s^\g_i$ acts on $\g$ variables, $s^\li_i$ acts on $\li$-variables, $s^\mu_i$ acts on $\mu$ variables. See \cite[\S11]{RiWe1}, \cite[\S3]{RiWe2}.  
\medskip

 In general, the Borisov-Libgober elliptic class depends on the multiplicities imposed on the boundary divisor, and the variables $\mu_i$ determine  the boundary divisor for each Schubert variety. Moreover , the parameters $\mu$ have a good global interpretation.  In the case of the homogeneous space $G/B$ the  choice of the boundary divisor can be made uniformly for all the Schubert varieties. We recall the construction of \cite[\S4]{RiWe1}. Let  $L_\lambda\in {\rm Pic}(G/B)$ be a  fractional line bundle on $G/B$ determined by $\lambda\in \mathfrak t^*_{\mathbb Z}$. Then $L_\lambda|_{X_w}$ defines a divisor supported by the boundary $\partial X_w$. The coefficients $a_{w,v}$ of its components $X_v\subset \partial X_w$ are given by the Chevalley formula  $${~~} c_1(L_\lambda)\cap[X_w]=\sum_v a_{w,v} [X_v]\in H_{2(\dim X_w-1)}(X_w)\,,\quad \text{ where }\cap\colon H^*(X_w)\otimes H_*(X_w)\to H_*(X_w),$$ 
\cite{Chevalley},\cite[Prop.
1.4.5]{BrionFlag}. The recursion for the elliptic classes has built in the recursion of the coefficients following from the Kempf formula \cite[Lemma 3]{Kem76}. Therefore, the variables $\mu_i=h^{1-\lambda_i}$ can be interpreted as functions on $\mathfrak t^*$, while the equivariant variables $\li_i$  are characters, i.e. functions on  $\mathfrak t$.
\medskip

There is certain asymmetry in the formulas (\ref{BSrecursion}--\ref{Rrecursion}), which is caused by different roles of variables.  This asymmetry disappears when we do not divide $\GL_n$ by the Borel group. The full algebra of operations can be encapsulated in the construction described below in Section \S\ref{rozdzialalgebra}.

\section{Elliptic classes of 2-nilpotent orbits -- the beginning}\label{beginning}
The elliptic characteristic classes of Borisov and Libgober are defined for certain class of singular algebraic varieties. The construction is generalized
to pairs $(X,D)$, where $D$ is a divisor and $X$ is smooth away from the support of the divisor $D$. See \cite{BoLi}, \cite[\S2]{RiWe1} for the definition. The construction involves a resolution of singularities and the pull-back of the relative canonical divisor $K_X+D$. In particular, it is assumed that $K_X+D$ is a $\Q$-Cartier divisor.\medskip

The appearance of the canonical divisor $K_X$ may be somewhat surprising, but there are at least two reasons why it plays a role. First, this definition of a characteristic class applies to a wide class of singularities, and it can be shown (see \cite{Totaro, AW4}) that any characteristic class invariant under the Atiyah flop is, in essence, obtained via the Borisov-Libgober construction. Second, in birational geometry, the behavior of the canonical divisor and its multiplicities along the exceptional divisors of a resolution plays a crucial role; see {\cite[Lecture 6]{CKM}.}

\medskip

 We do not have enough information about the canonical divisors of $\B$-orbit closures of  2-nilpotent matrices, therefore we chose a slightly different way. 
Our geometric objects are  the closures of the orbits
$$X_{m,r}^w=\overline{\B_mw \cdot N^{\min}_{m,r}}\,,$$
where $w\in\Spe_m$ is a permutation and $N^{\min}_{m,r}$ is the minimal matrix of the size $m$ and rank $r$
{\setlength\arraycolsep{1pt}\begin{equation}\label{ener} N^{\min}_{m,r}=
\hbox{\tiny $\begin{bmatrix}
0&0&\dots&0&1&0&\dots&0\\
0&0&\dots&0&0&1&\dots&0\\[-5pt]
&&&&\vdots&&\ddots\\[-3pt]
0&0&\dots&0&0&0&\dots&1\\[-5pt]
&&&&\vdots&&\\[-3pt]
0&0&\dots&0&0&0&\dots&0\end{bmatrix}$}
\,,\quad 2r\leq m\end{equation}}
\noindent (there are $r$ entries with 1 on a parallel to the diagonal).   
Suppose $$\dim(X_{m,r}^w)=\ell(w)+\dim(X^{\id}_{m,r})\,,$$ in that case the permutation $w$ has the minimal possible length among the permutations transforming  $X^{\id}_{m,r}$ to $X_{m,r}^w$.
Then according to \cite{BePe} the variety $X_{m,r}^w$ admits a desingularization of the form 
\begin{equation}\label{BePerez}Z\strut^{\underline w}_{m,r}=P_{i_1}\times_\B  P_{i_2}\times_\B \dots \times_\B  P_{i_\ell}\times_\B  X^{\id}_{m,r}\,,\end{equation}
where $\underline w=s_{i_1}s_{i_2}\dots s_{i_\ell} $ is a reduced word representing $w$, and $P_i\subset \GL_m$ is the minimal parabolic subgroup corresponding to simple reflection $s_i$.

We will consider the push forward of the elliptic class of $(Z\strut_{m,r}^{\underline w},D)$ with $D$ defined in the (unique) way guaranteeing  the push-forward is pure. 
The resulting classes belong to the equivariant elliptic cohomology of $\Hom(\C^m,\C^m)$ with additional parameters, i.e. to the algebra $\AA_{m,r}$, a subalgebra of the algebra of meromorphic functions in $\tau$, $\xb$ and $\mub$  defined in section \ref{bundletype}.
We will prove algebraically that the resulting class does not depend on the reduced word. Moreover, our construction allows considering   non-reduced words and  permutations, not necessarily defining a birational map $Z\strut_{m,r}^{\underline w}\to  X\strut_{m,r}^{ w}$. 
It allows treating elliptic classes of link patterns as effects of an action of the algebra  defined in \S\ref{rozdzialalgebra}.
\medskip

Let us have a look at beginning of the induction.  
 The variety $X^{\min}_{m,r}=X_{m,r}^\id$ is just a vector subspace of $\End(\C^m)$ described by
$$a_{i,j}=0\quad\text{for }i=1,2,\dots m\,,~~1\leq j<m-r+i\,,$$
where $a_{i,j}$ are the entries of the matrix.
The boundary divisor of $\B_m\cdot N_{m,r}$ is given by
{\setlength\arraycolsep{1pt}
$$\prod_{i=1}^r a_{i,m-r+i}=0\,,\qquad\hbox{\tiny 
$\begin{bmatrix}
0&0&\dots&0&\bullet&*&\dots&*\\
0&0&\dots&0&0&\bullet&\dots&*\\[-5pt]
&&&&\vdots&&\ddots\\[-3pt]
0&0&\dots&0&0&0&\dots&\bullet\\[-5pt]
&&&&\vdots&&\\[-3pt]
0&0&\dots&0&0&0&\dots&0\end{bmatrix}$}
\,.
$$}
We attach a generic multiplicity $\lambda_i$   to each  component of the boundary divisor $$D_i=\{a_{i,m-r+i}=0\}\,.$$  
In the Borisov-Libgober definition of the elliptic class the multiplicities appear only  combined with $h$ as $h^{1-\text{multiplicity}}$. Since we treat the variable $\lambda_i$ as free variables we introduce 
\begin{equation}\label{muintro}\mu_i=h^{1-\lambda_i}\end{equation} and in further formulas $\lambda_i$ will not appear. The arguments of the $\delta$-function cannot be equal to $1$, because at $\mub=1$ the function $\delta(\xb,\mub)$ has a pole. Our inductive construction leads to factors that have integral combinations of multiplicities. It is formally correct to consider the arguments $\mu_i$ as independent variables. Determining which actual combinations appear in the formula for a given pattern requires separate analysis. By  an algorithm for the resolution of singularities of $2$-nilpotent matrices, we can find  that poles can are at hypersurfaces $\mu_i\equiv \mu_j\in \C^* \mod q^{\mathbb Z} $.
\medskip

In section \ref{dlaGbyB} summarizing the results about elliptic classes of Schubert varieties in $\GL_n/\B_n$ the variables $\mu_i$ denoted basis cocharacters of the maximal torus. Now we are in a more general situation. The variables $\mu_i$ are the  multiplicities at the boundary of the minimal rank $r$ 2-nilpotent orbit. However, we attempt to maintain consistent notation with the formula from the previous case, which corresponds to $m=2n$ and $r=n$.
\medskip

The $\B_m$ orbits are preserved by the torus $\T=(\C^*)^m\times \C^*$. The first factor is the maximal torus of $\GL_m$ acting by conjugation on matrices, the second factor acts by scalar multiplication.
The corresponding characters are denoted by $x_i$, $i=1,2,\dots,m$ and $u$.
We are at the position to apply directly the definition of Borisov and Libgober, or better use the formula \cite[formula (5)]{RiWe1}. No desingularization is needed here. 
We obtain 
\begin{equation}\label{poczatek}\Elclas(X^{\min}_{m,r},\sum_{i=1}^r\lambda_iD_i)=\prod _{i=1}^r\left(\delta(u\tfrac{x_i}{x_{m-r+i}},\mu_i)\cdot\prod_{j=m-r+i+1}^m \delta(u\tfrac{x_i}{x_j},h)\right)\,.\end{equation} 
This is our starting point. The remaining elliptic classes of Borel orbits are well-defined due to the property of the Hecke algebra action, in particular the braid relations \eqref{ellbraid} and the flip relation \eqref{flip}. This is shown in \S\ref{niezaleznosc}. We avoid  analysis of the canonical divisor.

\section{Action of elliptic Demazure operations on functions}
\label{defDemazure}
\label{rozdzialalgebra}
We construct elliptic Demazure operators acting on the algebra $\AA_{m,r}$ defined in Definition \ref{defAA}.
The unified elliptic Demazure operations are lifts and extensions of those defined in \cite{RiWe1}.
The action on a meromorphic function $f\in \AA_{m,r}$ is defined by the formula
\begin{equation}\label{elldemazur}\Cc_i^\mu(f)=\delta\big(\tfrac{x_{i+1}}{x_i},\mu\big)f+\delta\big(\tfrac{x_i}{x_{i+1}},h\big)s_if\,,\qquad i=1,2,\dots,m-1.\end{equation}
Here $s_i$ acts only on $x$-variables.
The operations $\Cc^\mu_i$ satisfy braid relations with parameters
\begin{equation}\label{ellbraid}\Cc_i^{\mu}\;\Cc_{i+1}^{\mu \nu}\;\Cc_i^{\nu}=
\Cc_{i+1}^{\nu}\;\Cc_i^{\mu \nu}\;\Cc_{i+1}^{\mu}\,.\end{equation}
The braid relation (Yang-Baxter equation with spectral parameter, 
see e.g.~\cite[\S12.1, p 380]{ChariPressley} or \cite{JKOS} for other variants.) can be rewritten as 
$$\Cc_i^{\beta/\gamma}\;\Cc_{i+1}^{\alpha/\gamma}\;\Cc_i^{\alpha/\beta}=
\Cc_{i+1}^{\alpha/\beta}\;\Cc_i^{\alpha/\gamma}\;\Cc_{i+1}^{\beta/\gamma}$$
and represented by the following picture 
$$\begin{tikzcd}[nodes={inner sep=0pt}] 
     \alpha \arrow[-]{dr}{\hskip-6pt\frac\alpha\beta}
    & \beta\arrow[-]{dl}
    & \gamma\arrow[-]{d}
        \\
     \beta\arrow[-]{d} 
    & \alpha\arrow[-]{dr} {\hskip-6pt\frac\alpha\gamma}
    & \gamma\arrow[-]{dl}
     \\
   \beta \arrow[-]{dr} {\hskip-6pt\frac\beta\gamma}
    & \gamma\arrow[-]{dl} 
    & \alpha\arrow[-]{d}
    \\
      \gamma & \beta & \alpha  \\
\end{tikzcd}\hskip20pt\begin{matrix}=\\ \\ \end{matrix}\hskip20pt\begin{tikzcd}[nodes={inner sep=0pt}] 
    \alpha\arrow[-]{d} 
    &\beta \arrow[-]{dr}{\hskip-6pt\frac\beta\gamma} 
    & \gamma\arrow[-]{dl}
    \\
    \alpha\arrow[-]{dr}{\hskip-6pt\frac\alpha\gamma} 
    & \gamma\arrow[-]{dl} 
    & \beta\arrow[-]{d}
    \\
    \gamma\arrow[-]{d} 
    &\alpha \arrow[-]{dr}{\hskip-6pt\frac\alpha\beta} 
    & \beta\arrow[-]{dl}
    \\
  \gamma   
    & \beta&
        \alpha&\hskip-30pt.   \\
\end{tikzcd}
$$
The operations $\Cc_i^\mu$ satisfy the following quadratic relation
$$\Cc_i^{\mu}\;\Cc_i^{1/\mu}=\delta(h,\mu)\delta(h,1/\mu)\id\,.$$
After normalization 
\begin{equation}\label{normalizacja}\Cbar_i^{\mu}=\tfrac1{\delta(\mu,h)}\Cc_i^{\mu}\end{equation}
we obtain
$$\Cbar_i^{\mu}\;\Cbar_i^{1/\mu}=\id\,.$$

It should be noted that the operations $\beta_i^{\rm coh}$ and $\TL_i^{\rm coh}$ of the table in \S\ref{tabela} act on the  polynomial ring, which is identified with $H^*_\T(pt)$, the operations $\beta_i$ and $\TL_i$ act on the ring of Laurent polynomials (extended by the variable $y$) -- which is identified with $K_\T(pt)[y]$. The natural domain of the operations $\Cbar^\mu_i$ is the ring of meromorphic functions in $(x,\mu,h)$ which descend to sections of  line bundles over the product of elliptic curve $E$. Additionally we need a variable $u$ which is not involved in the definition of $\Cc_i^{\mu}$.
\medskip

The proofs are given in Section \S\ref{dowodwarkocza}.

\begin{remark}[Weyl group representation]\rm 
Let us restrict our attention to the case $m=2n$, $r=n$. We identify the variables $x_{n+i}=\li_i$ for $i\leq n$ with Chern classes of the tautological bundles on the flag variety, while
characters of the maximal torus in $\GL_n$ remain to be denoted by $x_i$ for $i\leq n$. 
Let $\T^\vme$ be the dual torus, $\mu_i\colon\T^\vme\to\C^*$ the corresponding cocharacters. In \cite{RiWe1, RiWe2} we have defined the operations on $Mero(\T^2\times\T^\vme\times\C^*)$ 
$$\Ccal_i=\Cc_i^{\mu_i^\vme}\circ s^\mu_i\,,
$$
where $s^\mu_i$ is the reflection acting on $\T^\vme$ inducing an action  on the functions on  $\T^\vme$. 
Explicitly:
$$\Ccal_i(f)(z,\gamma,\mu)=\delta(\tfrac{\gamma_{i+1}}{\gamma_i},\tfrac{\mu_{i+1}}{\mu_i})\,f(z,\gamma,s_i(\mu))+\delta(\tfrac{\gamma_{i}}{\gamma_{i+1}},h)\,f(z,s_i(\gamma),s_i(\mu))$$ 
It is shown in \cite[Theorem 5.1]{RiWe1} that $$\Ccal_i\circ\Ccal_i=\delta(h,\tfrac{\mu_{i+1}}{\mu_i})\delta(h,\tfrac{\mu_{i}}{\mu_{i+1}})\,\id$$
after reduction to elliptic cohomology of the flag variety. In fact the relation holds on the level of functions in $x_i$, $\li_i$ and $\mu_i$. This is a special case of Theorem \ref{identities}.
After normalization we obtain an action of the permutation group, as in  \cite[Proposition 4.11]{ZhZh1}.
\end{remark}

\section{The main results}
For each labelled link pattern we will construct a meromorphic function in variables $x\in (\C^*)^{m}$, $\mu\in (\C^*)^r$ and $u,h\in \C^*$
$$\text{labelled  link pattern }\pa~~\mapsto~~ \Ep(\pa)\in \AA_{m,r}$$
with the following properties:
\begin{enumerate}[(i)]
\item the function $\Ep(\pa)$ has a pure type, i.e.~it defines a section of a line bundle, as explained in \S\ref{bundletype}),
\item $\Cc_i^\mu(\Ep(\pa))=\Ep(s_i\pa)$ for $\mu\in(\C^*)^r$ uniquely determined by the purity condition
\item for the minimal orbit of rank $r$ the class
$\Ep(\pa^{\min}_{m,r})$ is given by the formula \eqref{poczatek},
\item if $m=2n$ and the link pattern represents a permutation $\sigma\in \Spe_n$, then (a normalization of) $\Ep(\pa)$ represents the equivariant elliptic class of the Schubert variety $X_\sigma$,
\item if $m=2n-1$ and the link pattern represents an injective map $$\{1,2,\dots,n-1\}\to \{1,2,\dots,n\}\,,$$ then the elliptic class is equal to the elliptic weight function of \cite{RTV} (up to a change of variables and normalization),
\item $\Ep(\pa)$ is  the push forward from Bott-Samelson resolution of the elliptic class in the sense of Borisov-Libgober for a unique choice of the boundary divisor guaranteeing purity.
\end{enumerate}
The conditions (i)--(iii) determine the function $\Ep(\pa)$.
\medskip

It should be noted that a change of arrow labels  results in permutation of $\mu$-variables in $\Ep(\pa)$. 
 The key argument of the proof of independence of the choice of a resolution is based on an application  of the reduced operations $\Cbar_i^\mu$, for the right choice of $\mu$, forced by the purity condition. We obtain a $\Spe_m$-action, hence we do not have to focus on the reduced words representing permutations. Moreover, the change of purity types is well controlled:
 $$\type(\Ep(s_i\pa))=s_i(\type(\Ep(\pa))-h\rho_m)+h\rho_m\,,$$
 see Theorem \ref{wyznaczenienu}.
Here $\rho_m$ is given by \eqref{halfsum}.
 This allows to write a ready to use formula for the type of $\Ep(w(\pa^{\min}_{m,r}))$ for $w\in \Spe_m$ in combinatorial terms.
 \medskip
 
We apply   Borisov and Libgober results to show that the  identity 
\begin{equation}\label{flip}s_k^\mu\Cc^{\mu_k/\mu_{k+1}}_k(\Ep(\pa^{\min}_{m,r}))=\Cc^{\mu_k/\mu_{k+1}}_{m-r+k}(\Ep(\pa^{\min}_{m,r}))\qquad \text{for }1\leq k<r\,.\end{equation}
holds.
We call the resulting identity of elliptic functions {\it the flip relation}. 
 Although this identity can be read from the four-term relation \cite[eq.~(2.7)]{RTV} we wish to give its geometric proof.
 For $m=4$, $r=2$ the identity is illustrated by the transformation of link patterns
{\def\bullet{\cdot}
\begin{equation}\label{przyklad42}
\xymatrix@-1.4pc{
\bullet\ar@{<-}@/^1.5pc/^{\mu_1}[rr]
&
\bullet\ar@{<-}@/^1.5pc/^{\mu_2}[rr]
&
\bullet
&	
\bullet&=&\\
\bullet\ar@{-}[ur]
&
\bullet\ar@{-}[ul]
&\bullet\ar@{-}[u]
&
\bullet\ar@{-}[u]
&&
\bullet\ar@{<-}@/^3.5pc/^{\mu_2}[rrr]
&
\bullet\ar@{<-}@/^2pc/^{\mu_1}[r]
&
\bullet
&	
\bullet}
\hskip30pt
\begin{matrix}\stackrel{s_1^\mu}{\leftarrow\hskip-0.1em \rightarrow}\\ \end{matrix}
\hskip30pt
\xymatrix@-1.4pc{
\bullet\ar@{<-}@/^1.5pc/^{\mu_1}[rr]
&
\bullet\ar@{<-}@/^1.5pc/^{\mu_2}[rr]
&
\bullet
&	
\bullet&=&\\
\bullet\ar@{-}[u]
&
\bullet\ar@{-}[u]
&\bullet\ar@{-}[ur]
&
\bullet\ar@{-}[ul]
&&
\bullet\ar@{<-}@/^3.5pc/^{\mu_1}[rrr]
&
\bullet\ar@{<-}@/^2pc/^{\mu_2}[r]
&
\bullet
&	
\bullet}
\end{equation}
}
Here $s_1^\mu$ is the transposition of labels of the link pattern.
\medskip
The flip identity for $m=4$, $r=2$ is proven in \S\ref{basicexample}. It implies  identities  in higher dimensions, see the proof of Theorem \ref{niezalezy}. 
\medskip

We also need the Fay trisecant relation in the form \cite[Thm.~5.3]{FRV} or \cite[\S4.1]{MiWe}. 

\section{Verification of the braid and quadratic identities}\label{dowodwarkocza}
Let us fix $r\geq0$ and $m\geq 2r$. The operation  $\Cc^\mu_i$ defined by \eqref{elldemazur} acts on meromorphic functions  on $(\C^*)^{m+r+2}$ (with coordinates $x_i,u,\mu_j,h$, $i=1,2,\dots m$, $j=1,2,\dots, r$). The operation $\Cc^\mu_i$  depends on a character $\mu:(\C^*)^r\to \C$ written as a combination of basic characters $\mu_1,\mu_2,\dots,\mu_r$. 
We assume that $\mu\not\equiv 1\mod q^{\mathbb Z}$, since at $\mu=1$ the function $\delta(x,\mu)$ has a pole.
Obviously $\Cc_i^\mu$ commutes with $\Cc_j^\mu$ if $|i-j|>1$.
We focus  on  $m=3$, the general case follows.
\begin{theorem}\label{identities} The operations $\Cc_i^\mu$ satisfy the twisted braid relations
\begin{equation}\label{braid}\Cc_1^{\nu}\;\Cc_2^{\mu\nu}\;\Cc_1^{\mu}=
\Cc_2^{\mu}\;\Cc_1^{\mu\nu}\;\Cc_2^{\nu}\,.\end{equation}
Moreover,
\begin{equation}\label{nonredkwadrat}\Cc_i^{\mu}\;\Cc_i^{1/\mu}=\delta(h,\mu)\delta(h,1/\mu)\,.\end{equation}
\end{theorem}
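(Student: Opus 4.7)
The plan is to verify both identities by direct expansion and reduction to the Fay trisecant identity in the $F$-function form of \cite[Thm.~5.3]{FRV}, \cite[\S4.1]{MiWe}; the strategy follows \cite[Theorem 5.1]{RiWe1} but keeps the free parameter $\mu$ explicit throughout.

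For the quadratic identity \eqref{nonredkwadrat} I would simply expand. Writing $a = x_{i+1}/x_i$, so that $s_i$ sends $a$ to $1/a$, a direct computation gives
$$\Cc_i^\mu \Cc_i^{1/\mu}(f) = \bigl(\deltm(a,\mu)\deltm(a,1/\mu)+\deltm(1/a,h)\deltm(a,h)\bigr)f + \deltm(1/a,h)\bigl(\deltm(a,\mu)+\deltm(1/a,1/\mu)\bigr)s_i f.$$
The coefficient of $s_i f$ vanishes by the antisymmetry $\deltm(1/a,1/\mu) = -\deltm(a,\mu)$, which is the multiplicative translation of \eqref{antysymetria}. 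The coefficient of $f$ must equal $\deltm(h,\mu)\deltm(h,1/\mu)$, and this is precisely the three-term Fay relation applied to the triple of second arguments $(a,\mu,h)$; it is the identity already implicit in \cite[Theorem 5.1]{RiWe1}.

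For the braid relation \eqref{braid} I would expand both sides as sums over the eight words in $s_1,s_2$ of length three. The subgroup generated by $s_1,s_2$ acting on $x_1,x_2,x_3$ is $\Spe_3$, with six elements, so both sides become combinations $\sum_{w\in \Spe_3} c_w(\mu,\nu,h)\,(s_w f)$, and \eqref{braid} is equivalent to the equality of the six coefficients $c_w$. The two extreme coefficients $c_e$ and $c_{s_1 s_2 s_1}$ are each a product of three $\deltm$-factors that match verbatim on both sides — each comes from the unique length-$0$ (respectively length-$3$) word. The four remaining coefficients $c_{s_1}, c_{s_2}, c_{s_1 s_2}, c_{s_2 s_1}$ each receive exactly two contributions on each side, and after factoring out a common $\deltm$ each equality collapses to a single instance of the Fay trisecant identity, with the three second arguments being a permutation of the parameters drawn from $\mu,\nu,h$.

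The main obstacle is purely bookkeeping: one must align the labelling of the three Fay variables in each of the four non-trivial comparisons so that the middle character $\mu\nu$ appears correctly from the central factor $\Cc_2^{\mu\nu}$ on the left and $\Cc_1^{\mu\nu}$ on the right. Once this matching is set up — most cleanly by normalising both sides by the product of the two ``extreme'' factors $\deltm(x_{j+1}/x_j,h)$ that are common to both sides — each of the four residual equalities is a single application of Fay, and no further analytic input is required. The independence of the commuting case $|i-j|>1$ is immediate from the fact that $\Cc_i^\mu$ involves only $x_i,x_{i+1}$, so the theorem reduces to the $m=3$ verification treated above.
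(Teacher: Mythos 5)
Your strategy — expanding both sides as $\sum_{w\in\Spe_3}c_w(\mu,\nu,h)\,s_w f$ and comparing coefficients — is exactly the paper's approach, and your treatment of the quadratic relation \eqref{nonredkwadrat} is correct: the $s_if$-coefficient cancels by the antisymmetry \eqref{antysymetria}, and the $f$-coefficient gives a single three-term elliptic (``blow-up''/Fay-type) identity. However, your bookkeeping for the braid relation \eqref{braid} is wrong in a way that would derail the proof.

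Concretely: each side of \eqref{braid} is a sum over the $2^3$ choices of ``identity term'' or ``$s_i$ term'' in the three factors, and the resulting permutation is the product of the chosen generators. On the left, the permutation $e$ is produced by \emph{two} words, $(e,e,e)$ and $(s_1,e,s_1)$; on the right, by $(e,e,e)$ and $(s_2,e,s_2)$. Hence $c_e$ is a sum of two triple products on each side, and these two sides are manifestly \emph{not} equal term by term — their equality is precisely the four-term identity \eqref{fourterm} (the elliptic Yang--Baxter identity, \cite[(32)=(33)]{RiWe1}, \cite[eq.~(2.20)]{RTV}), which is the deepest input. Your claim that $c_e$ ``comes from the unique length-$0$ word'' and ``matches verbatim'' is simply incorrect and hides the main content of the theorem. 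Similarly, your claim that $c_{s_1},c_{s_2},c_{s_1s_2},c_{s_2s_1}$ each receive two contributions per side and each reduce to a Fay identity is also wrong: $c_{s_1}$ has two contributions on the left but only one on the right, $c_{s_2}$ the reverse, and each of these is a three-term (Fay/blow-up) identity after factoring out a $\deltm(\cdot,h)$. The remaining $c_{s_1s_2}$, $c_{s_2s_1}$ and $c_{s_1s_2s_1}$ each receive exactly \emph{one} contribution on each side and match on the nose, needing no theta identity at all. So the actual tally is: one four-term identity, two three-term identities, and three trivial matches — not two trivial extremes and four Fay instances. Because the proposal treats the central four-term Yang--Baxter relation as a verbatim match, it would fail at the crucial step if carried out; the fix is to recompute the left- and right-hand coefficients as above and recognize $c_e$ as the four-term identity.
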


\begin{proof} We adjust the notation to the one used in \cite{RiWe1}.
We set $\nu=\frac{\mu_2}{\mu_1}$ and $\mu=\frac{\mu_3}{\mu_2}$ and we rewrite the braid relation:
$$\Cc_1^{\mu_2/\mu_1}\;\Cc_2^{\mu_3/\mu_1}\;\Cc_1^{\mu_3/\mu_2}=
\Cc_2^{\mu_3/\mu_2}\;\Cc_1^{\mu_3/\mu_1}\;\Cc_2^{\mu_2/\mu_1}\,.$$
To check that relation we look at the coefficients of $f(x_i,x_j,x_k)$. We have to prove a number of identities.
Comparing the coefficients of $f\left(x_1,x_2,x_3\right)$ we have to show
{\small
\begin{multline} \label{fourterm}\delta \left(\frac{x_1}{x_2},h\right)
   \delta \left(\frac{x_2}{x_1},h\right) \delta
   \left(\frac{x_3}{x_1},\frac{\mu _3}{\mu _1}\right)+\delta
   \left(\frac{x_2}{x_1},\frac{\mu _2}{\mu _1}\right) \delta
   \left(\frac{x_2}{x_1},\frac{\mu _3}{\mu _2}\right) \delta
   \left(\frac{x_3}{x_2},\frac{\mu _3}{\mu _1}\right) =\\
   =\delta
   \left(\frac{x_2}{x_3},h\right) \delta \left(\frac{x_3}{x_2},h\right)
   \delta \left(\frac{x_3}{x_1},\frac{\mu _3}{\mu _1}\right)+\delta
   \left(\frac{x_2}{x_1},\frac{\mu _3}{\mu _1}\right) \delta
   \left(\frac{x_3}{x_2},\frac{\mu _2}{\mu _1}\right) \delta
   \left(\frac{x_3}{x_2},\frac{\mu _3}{\mu _2}\right). \end{multline}}
   This is exactly the equality  \cite[(32)=(33)]{RiWe1} or \cite[eq.~(2.20)]{RTV}.
The comparison of the remaining coefficients leads to the equations
{\small
$$\begin{array}{cc}
 f\left(x_1,x_3,x_2\right): & \delta \left(\frac{x_2}{x_3},h\right)
   \left(\delta \left(\frac{x_2}{x_1},\frac{\mu _3}{\mu _2}\right)
   \delta \left(\frac{x_3}{x_1},\frac{\mu _2}{\mu _1}\right)-
   \delta \left(\frac{x_2}{x_3},\frac{\mu _3}{\mu _2}\right) 
   \delta \left(\frac{x_3}{x_1},\frac{\mu _3}{\mu _1}\right)-
   \delta \left(\frac{x_2}{x_1},\frac{\mu _3}{\mu _1}\right) 
   \delta \left(\frac{x_3}{x_2},\frac{\mu _2}{\mu _1}\right)\right) =0\,, \\ \\
 f\left(x_2,x_1,x_3\right): & \delta \left(\frac{x_1}{x_2},h\right)
   \left(
   \delta \left(\frac{x_1}{x_2},\frac{\mu _2}{\mu _1}\right)
   \delta \left(\frac{x_3}{x_1},\frac{\mu _3}{\mu _1}\right)+
   \delta \left(\frac{x_2}{x_1},\frac{\mu _3}{\mu _2}\right) 
   \delta \left(\frac{x_3}{x_2},\frac{\mu _3}{\mu _1}\right)-
   \delta \left(\frac{x_3}{x_1},\frac{\mu _3}{\mu _2}\right) 
   \delta \left(\frac{x_3}{x_2},\frac{\mu _2}{\mu _1}\right)
\right) =0\,,\\
 \end{array}
$$}
which follow from the blow-up relation \cite[Ex.~2.10]{RiWe1}.
The reaming coefficients are equal on the nose. 
The quadratic relation is equivalent to:
$$\begin{array}{cl}
f(x_1,x_2):&\delta
   \left(\frac{x_1}{x_2},h\right) \delta
   \left(\frac{x_2}{x_1},h\right)+\delta
   \left(\frac{x_2}{x_1},\frac{1}{\mu }\right) \delta
   \left(\frac{x_2}{x_1},\mu \right) =
 \delta \left(\frac{1}{\mu },h\right) \delta (\mu ,h) \\ \\
f(x_2,x_1):& \delta \left(\frac{x_1}{x_2},h\right) \left(\delta
   \left(\frac{x_1}{x_2},\mu \right)+\delta
   \left(\frac{x_2}{x_1},\frac{1}{\mu }\right)\right)=0 \,.\\
\end{array}$$
The first one again follows from the blow-up relation (see \cite[Ex.~2.10]{RiWe1}, \cite[\S4.1]{MiWe}) and the second one is the easiest, since it follows from anti-symmetry  of $\delta$, see \eqref{antysymetria}.
\end{proof}

It is convenient to normalize the  operations $\Cc^\mu_i$
$$\Cbar_i^\mu=\tfrac1{\delta(\mu,h)}\Cc_i^\mu\,.$$
Then
\begin{equation}\label{redkwadrat}\Cbar_i^{\mu}\;\Cbar_i^{1/\mu}=\id\,.\end{equation}
The normalized operations $\Cbar_i^\mu$ satisfy the braid relation 
\begin{equation}\label{redbraid}\Cbar_1^{\nu}\;\Cbar_2^{\mu\nu}\;\Cbar_1^{\mu}=
\Cbar_2^{\mu}\;\Cbar_1^{\mu\nu}\;\Cbar_2^{\nu}\,.\end{equation}
since 
$$\Cbar_1^{\nu}\;\Cbar_2^{\mu\nu}\;\Cbar_1^{\mu}=\tfrac1{\delta(\nu,h)\delta(\mu\nu,h)\delta(\mu,h)}\Cc_1^{\nu}\;\Cc_2^{\mu\nu}\;\Cc_1^{\mu}$$
and
$$\Cbar_2^{\mu}\;\Cbar_1^{\mu\nu}\;\Cbar_2^{\nu}=\tfrac1{\delta(\mu,h)\delta(\mu\nu,h)\delta(\nu,h)}
\Cc_2^{\mu}\;\Cc_1^{\mu\nu}\;\Cc_2^{\nu}\,.$$

\section{Preserving purity}
We consider meromorphic functions on $(\C^*)^{m+r+2}$ in variables $u=x_0$, $x_i$ ($i=1,2,\dots,m$),  $h=\mu_0$, $\mu_j$ ($j=1,2,\dots,r$),  which belong to $\AA_{m,r}$. Recall that the notion of purity was defined in section \S\ref{bundletype}, Definition \ref{defpure}.  

\begin{lemma}\label{wzor_na_nu}Suppose 
$0\neq f\in \AA_{m+1,r}$ 
is a pure   function of the type 
$\type(f)$, then $\Cc^\mu_i(f)$ is pure 
if and only if
$$\mu=\partial_i(\type(f))-h\,.$$
where
$$\partial_i(\type(f))=\frac{\type(f)-s_i\type(f)}{x_i-x_{i+1}}\,.$$
\end{lemma}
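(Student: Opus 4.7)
The plan is to reduce the lemma to a direct computation of the quadratic forms associated with each of the two summands in the definition of $\Cc_i^\mu$, using the calculus of types from Section \ref{bundletype}.

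First I would compute the types of both terms. Writing $\mu$ additively as a linear combination of the $\mu_j$'s (so that the multiplicative $\deltm$-argument $\mu$ corresponds to the additive coordinate $\mu$ on the character lattice), the identity $\type(\FF(x,y))=xy$ together with the important convention at the end of Section \ref{bundletype} gives
\[
\type\bigl(\deltm(\tfrac{x_{i+1}}{x_i},\mu)\bigr)=(x_{i+1}-x_i)\,\mu,\qquad \type\bigl(\deltm(\tfrac{x_i}{x_{i+1}},h)\bigr)=(x_i-x_{i+1})\,h.
\]
Since $s_i$ permutes the coordinates $x_i,x_{i+1}$, the quasi-periodicity factors of $s_if$ are obtained from those of $f$ by applying $s_i$, so $\type(s_if)=s_i(\type(f))$. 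Multiplicativity of types under products yields
\[
\type\bigl(\deltm(\tfrac{x_{i+1}}{x_i},\mu)f\bigr)=(x_{i+1}-x_i)\mu+\type(f),\quad \type\bigl(\deltm(\tfrac{x_i}{x_{i+1}},h)s_if\bigr)=(x_i-x_{i+1})h+s_i\type(f).
\]

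Second, I would impose purity by equating these two types. The equation
\[
(x_{i+1}-x_i)\mu+\type(f)=(x_i-x_{i+1})h+s_i\type(f)
\]
rearranges to $(x_i-x_{i+1})(\mu+h)=\type(f)-s_i\type(f)$, i.e. $\mu+h=\partial_i(\type(f))$, giving the claimed formula for $\mu$. This proves the ``if'' direction: when $\mu=\partial_i(\type(f))-h$, both summands lie in the same line bundle and their sum is pure.

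For the ``only if'' direction I would argue that two nonzero pure sections of distinct line bundles cannot sum to a pure section. Concretely, if $g_1,g_2$ are pure of types $\alpha_1\neq\alpha_2$ and $g_1+g_2$ is pure of some type $\gamma$, then comparing quasi-periodicity factors under translation by $\tau$ in any variable where $\alpha_1,\alpha_2,\gamma$ differ yields a linear relation $(\phi_1-\phi_\gamma)g_1=(\phi_\gamma-\phi_2)g_2$ with not all three factors equal, which forces $g_1=0$ or $g_2=0$. In our case both coefficient functions $\deltm(\tfrac{x_{i+1}}{x_i},\mu)$ and $\deltm(\tfrac{x_i}{x_{i+1}},h)$ are nonzero (recall $\mu\neq 1$), and since $f\neq 0$ so is $s_if$; hence both summands are nonzero, and purity forces equality of their types, giving the required value of $\mu$.

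The only delicate point is this last nonvanishing/independence step; the rest is a bookkeeping exercise with the type calculus of Section \ref{bundletype}.
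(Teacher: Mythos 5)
Your proposal is correct and follows essentially the same route as the paper: compute the two summand types $\type(f)+(x_{i+1}-x_i)\mu$ and $s_i\type(f)+(x_i-x_{i+1})h$, equate them, and solve for $\mu$. In fact you are slightly more careful than the paper's proof, which only records the type computation and the resulting equation without spelling out the ``only if'' direction (i.e.\ that purity of the sum forces the two summands to have equal types); your remark that the coefficient functions and $s_if$ are all nonzero, and that nonzero sections of genuinely different line bundles cannot sum to a pure section, is the implicit justification the paper leaves to the reader.
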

\begin{proof}
The formula \eqref{elldemazur} defining $\Cc_i^\mu(f)$ has two summands. The first one has the type
\begin{equation} \label{typbd}\type\left(\delta\big(\frac{x_{i+1}}{x_i},\mu\big)f\right)=\type(f)+(x_{i+1}-x_i)\mu\end{equation}
and the second summand
\begin{equation}\label{typin}\type\left(\delta\big(\frac{x_{i}}{x_{i+1}},h\big)s_if\right)=s_i\type(f)+(x_{i}-x_{i+1})h\,.\end{equation}
Transforming the equality of \eqref{typbd} and \eqref{typin}
we obtain
$$\mu+h=\frac{\type(f)-s_i\type(f)}{x_{i}-x_{i+1}}=\partial_i(\type(f))\,.$$
\end{proof}

We will say that $\mu\in(\C^*)^m$ is admissible for the operation $\Cc^\mu_i$ 
(or equivalently for $\Cbar^\mu_i$) applied to a pure function $f$ if the result $\Cc^\mu_i(f)$ is pure. By Lemma \ref{wzor_na_nu} $\mu$ is admissible 
(for the particular pure function $f$ and $i\in\{1,2,\dots,m-1\}$)
if and only if $\mu=\partial_i(\type(f))-h$.
\medskip

Let $\rho_m$ be defined by the formula \begin{equation}\label{halfsum}\rho_m=-\sum_{i=1}^m i \,x_i\,.\end{equation}
 We have
$$\partial_i(\rho_m)=1\quad\text{ and }\quad
s_i(\rho_m)=\rho_m-(x_i-x_{i+1})$$
for $i=1,2,\dots, m-1$.

\begin{corollary}Suppose $f\neq0$ is a pure meromorphic function on 
$(\C^*)^{m+r+2}$ of the type $\type(f)$. Let 
$\mu=\partial_i(\type(f))-h$,  then $\Cc^\mu_i(f)$ if pure of the type 
$$\type(\Cc_i^\mu(f))=s_i\big(\type(f)-\rho_m h\big)+\rho_m h\,.$$
\end{corollary}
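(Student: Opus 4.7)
The plan is to directly invoke Lemma \ref{wzor_na_nu} for the purity assertion, then compute the type by substituting $\mu=\partial_i(\type(f))-h$ into the expression \eqref{typbd} for the type of either summand of $\Cc_i^\mu(f)$, and finally rewrite the resulting expression in terms of $\rho_m$ using the identity $s_i(\rho_m)=\rho_m-(x_i-x_{i+1})$ recorded just before the corollary.

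In detail, first I would note that purity of $\Cc_i^\mu(f)$ is precisely the content of Lemma \ref{wzor_na_nu}: the admissibility condition $\mu=\partial_i(\type(f))-h$ is equivalent to both summands \eqref{typbd} and \eqref{typin} having the same type, so the sum defining $\Cc_i^\mu(f)$ is pure. Hence its type equals the common value of \eqref{typbd} and \eqref{typin}. Substituting the admissible $\mu$ into \eqref{typbd} gives
\begin{align*}
\type(\Cc_i^\mu(f)) &= \type(f)+(x_{i+1}-x_i)\bigl(\partial_i(\type(f))-h\bigr)\\
&= \type(f)-\bigl(\type(f)-s_i\type(f)\bigr)+(x_i-x_{i+1})h\\
&= s_i\type(f)+(x_i-x_{i+1})h,
\end{align*}
using the definition of $\partial_i$.

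To convert this to the stated form, I would use $s_i\rho_m=\rho_m-(x_i-x_{i+1})$, equivalently $\rho_m-s_i\rho_m=x_i-x_{i+1}$. Then
\begin{equation*}
s_i\bigl(\type(f)-\rho_m h\bigr)+\rho_m h = s_i\type(f)+(\rho_m-s_i\rho_m)h = s_i\type(f)+(x_i-x_{i+1})h,
\end{equation*}
which coincides with the expression computed above.

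There is no real obstacle here: the corollary is a formal consequence of Lemma \ref{wzor_na_nu} plus the elementary identity $\rho_m-s_i\rho_m=x_i-x_{i+1}$. The only mildly delicate point is to verify consistency by substituting the admissible $\mu$ into the second summand \eqref{typin} as well and checking the same answer, which is automatic since admissibility was defined to make the two summand types coincide.
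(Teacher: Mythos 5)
Your proof is correct and follows essentially the same route as the paper: purity via Lemma \ref{wzor_na_nu}, reduction of the type to $s_i\type(f)+(x_i-x_{i+1})h$ (the expression \eqref{typin}), and rewriting via $s_i\rho_m=\rho_m-(x_i-x_{i+1})$. The only cosmetic difference is direction — you substitute the admissible $\mu$ into \eqref{typbd} and simplify forward, whereas the paper expands the claimed formula $s_i(\type(f)-\rho_m h)+\rho_m h$ and matches it against \eqref{typin}.
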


\begin{proof}We have
\begin{align*}
s_i\big(\type(f)-\rho_m h\big)+\rho_m h&=s_i\big(\type(f)\big)-s_i(\rho_m)h+\rho_m h \\
&=s_i\big(\type(f)\big)-(\rho_m-(x_i-x_{i+1}))h+\rho_m h\\
&=s_i\big(\type(f)\big)+(x_i-x_{i+1})h\,,\end{align*}
which is equal to  \eqref{typin}.
\end{proof}

This means that admissible $\Cc_i^\mu$ operations act on types as the permutation of coordinates, provided that we shift the origin to $\rho_m h$.
\medskip

The operations $\Cbar_i^\mu$ differ from $\Cc_i^\mu$ by the factor $\frac1{\delta(h,\mu)}$, therefore the resulting type differs by the summand $-h\mu$.
The coefficient $\mu$ is given by  Lemma \ref{wzor_na_nu}.
\medskip

It is convenient to define the function $\phi_w:\C^m\to \C$ for each permutation $w\in \Spe_m$. As before let $x_i$ denote the standard coordinates of $\C^m$ on which $\Spe_m$ acts permuting the indices 
(action by $w^{-1}$ on indices).  We define
\begin{equation}\label{phi}\phi_w\left(\sum_{i=1}^m \alpha_i x_i\right)=
\sum_{i<j,\;w(i)>w(j)} \alpha_i-\alpha_j
\,.\end{equation}
The functions $\phi_w$ satisfy the cocycle condition
\begin{equation}\label{cocycl}\phi_{wv}(\alpha)=\phi_w(v(\alpha))+\phi_v(\alpha)\,.\end{equation}
We can evaluate the function $\phi_w$ on quadratic forms which are types of $f\in \AA_{m,r}$ since these types are  linear in $x_i$, hence formally we extend $\phi_w$ linearly, allowing $a_i$ to belong to the  vector space spanned by the parameters $\mub$. 
We summarize the consideration of types by the following theorem:

\begin{theorem}\label{wyznaczenienu} 
Let 
$\w=s_{i_1}s_{i_2}\dots s_{i_\ell}$ 
be a presentation of a permutation $w\in\Spe_m$, not necessarily reduced. Suppose $f$ is a pure meromorphic function. 
Denote by $\Cbar_\w^\ulu(f)$ the composition 
$$\Cbar_{i_1}^{\nu_1}\Cbar_{i_2}^{\nu_2}\dots \Cbar_{i_\ell}^{\nu_\ell}(f)\,,$$
with the coefficients $\nu_k$ chosen so that the operations preserve purity
$$\nu_k=\partial_{i_k}\type\big(\Cbar_{i_{k+1}}^{\nu_{k+1}}\Cbar_{i_{k+2}}^{\nu_{k+2}}\dots \Cbar_{i_\ell}^{\nu_\ell}(f)\big)-h\,.$$
Assume that $\nu_k+h\not\equiv 0 \mod 2\pi \ii\langle 1,\tau\rangle$ (or with the multiplicative notation $h\nu_k\not\equiv  q^{\mathbb Z}$), so that $\Cbar_i^\mu$ is defined.

Then the function $\Cbar_\w^\ulu(f)$ does not depend on the presentation of $w$. Moreover,
suppose 
$$\type(f)=q_x+\rho_m h+q_\mu\,,$$
where $q_x=\sum_{i=1}^m \alpha_i x_i$ and $q_\mu$ does not depend on the variables $x_i$, $i=1,2,\dots, m$.
Then
$$\type\big(\Cbar_\w^\ulu(f)\big)=
w(q_x)+\rho_m h+q_\mu-\phi_w(q_x)h\,.$$ 
\end{theorem}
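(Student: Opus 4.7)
\medskip

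\textbf{Plan.} My plan has three stages: (a) compute the admissible parameter and the resulting type of one step $\bar{\mathfrak C}_i^\mu$; (b) propagate this formula along a word by induction, using the cocycle relation \eqref{cocycl}; (c) derive independence of presentation from the braid relation \eqref{redbraid} and the quadratic relation \eqref{redkwadrat}, plus a Matsumoto-type reduction.

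\medskip

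\textbf{Step (a).} If $f$ is pure with $\type(f)=q_x+\rho_mh+q_\mu$ and $q_x=\sum_j\alpha_jx_j$, then Lemma \ref{wzor_na_nu} forces
$$\nu=\partial_i(\type(f))-h=(\alpha_i-\alpha_{i+1})+h-h=\alpha_i-\alpha_{i+1},$$
using that $q_\mu$ is independent of $x$ and $\partial_i(\rho_m)=1$. The Corollary gives $\type(\mathfrak C_i^\nu(f))=s_i(\type(f))+(x_i-x_{i+1})h$, and normalization subtracts $h\nu$, so
$$\type(\bar{\mathfrak C}_i^\nu(f))=s_i(q_x)+\rho_mh+q_\mu-(\alpha_i-\alpha_{i+1})h=s_i(q_x)+\rho_mh+q_\mu-\phi_{s_i}(q_x)h,$$
since $\phi_{s_i}(q_x)=\alpha_i-\alpha_{i+1}$ directly from \eqref{phi}.

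\medskip

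\textbf{Step (b).} I would argue by induction on $\ell$, applying the operators from right to left. If after $\ell-1$ steps the intermediate function $g$ is pure of type $w'(q_x)+\rho_mh+q_\mu-\phi_{w'}(q_x)h$ with $w'=s_{i_2}\cdots s_{i_\ell}$, then writing $w'(q_x)=\sum_j\beta_jx_j$, step (a) gives
$$\type(\bar{\mathfrak C}_{i_1}^{\nu_1}(g))=s_{i_1}w'(q_x)+\rho_mh+q_\mu-\phi_{w'}(q_x)h-(\beta_{i_1}-\beta_{i_1+1})h.$$
The cocycle relation \eqref{cocycl} reads $\phi_{s_{i_1}w'}(q_x)=\phi_{s_{i_1}}(w'(q_x))+\phi_{w'}(q_x)$, and $\phi_{s_{i_1}}(w'(q_x))=\beta_{i_1}-\beta_{i_1+1}$ by the defining formula; this collapses the last two terms to $-\phi_{s_{i_1}w'}(q_x)h$, closing the induction.

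\medskip

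\textbf{Step (c).} For independence, any two presentations of $w\in\Spe_m$ differ by a finite sequence of braid moves (including commutations for $|i-j|>1$) and insertion/deletion of pairs $s_i s_i$. The braid relation \eqref{redbraid} asserts $\bar{\mathfrak C}_1^{\nu}\bar{\mathfrak C}_2^{\mu\nu}\bar{\mathfrak C}_1^{\mu}=\bar{\mathfrak C}_2^{\mu}\bar{\mathfrak C}_1^{\mu\nu}\bar{\mathfrak C}_2^{\nu}$ for arbitrary parameters. The crucial verification is that, once the purity-admissible parameters are read off at each position on each side, they genuinely fit the triples $(\nu,\mu\nu,\mu)$ and $(\mu,\mu\nu,\nu)$. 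Using step (b) with intermediate linear parts $q_x$, $s_1(q_x)$, $s_2s_1(q_x)$ on the left and $q_x$, $s_2(q_x)$, $s_1s_2(q_x)$ on the right, with $q_x=\alpha_1x_1+\alpha_2x_2+\alpha_3x_3+\cdots$, the three admissible parameters on either side turn out to be $\alpha_1-\alpha_2,\,\alpha_1-\alpha_3,\,\alpha_2-\alpha_3$, exactly matching the braid pattern with $\mu=\alpha_1-\alpha_2$, $\nu=\alpha_2-\alpha_3$. The same calculation handles the quadratic move: two consecutive $\bar{\mathfrak C}_i$'s must carry admissible parameters $\nu$ and $1/\nu$, and \eqref{redkwadrat} makes the pair act as identity.

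\medskip

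\textbf{Main obstacle.} The subtle point is that the braid relation \eqref{redbraid} was proved for arbitrary formal parameters, but in our setup the parameters are \emph{determined} by the function they act on, and they change as the intermediate functions change. The real content is therefore the compatibility check in step (c): the purity-admissible parameters on the two sides of a braid move must coincide with the two symmetric triples required by \eqref{redbraid}. Once that matching is established from the explicit type formula of step (b), independence of presentation follows from the standard Matsumoto argument extended by $s_i^2$ reductions via \eqref{redkwadrat}, and the type formula holds for $w$ simply because it holds step-by-step.
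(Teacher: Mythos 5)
Your proposal is correct and follows essentially the same route as the paper: both compute the one-step effect of an admissible $\Cbar_i^\nu$ on the type, propagate via induction using the cocycle relation for $\phi_w$, and derive independence from the braid and quadratic relations by verifying that the purity-forced parameters instantiate the correct triples. The paper's proof is slightly more compressed (it lists the admissible parameters directly rather than emphasizing the Matsumoto reduction) but the substance is identical.
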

\begin{proof}
First let us compute the type of $\Cbar_\w^\ulu(f)$. For $\ell=1$
$$\type\big(\Cbar^{\nu_1}_{k}(f)\big)=\rho_m h+s_{k}(q_x)- \nu_1 h\,,$$
where $$\nu_1=\partial_{k}\left(\rho_m h+q_x+q_\mu\right)-h=\alpha_{k}-\alpha_{k+1}=\phi_{s_{k}}(q_x)$$
by  Lemma \ref{wzor_na_nu}, as claimed. Further we argue by induction. If $\w=s_k\w'$ then 
$$\type(\Cbar_{\w'}^\ulu(f))=\rho_m h+{w'}(q_x)+q_\mu-\phi_{w'}(q_x)h$$
by the inductive assumption. Then
\begin{align*}\type(\Cbar_{\w}^\ulu(f))&=\rho_m h+s_k(w'(q_x))+q_\mu-\phi_{w'}(q_x)h-
\phi_{s_k}(w'(q_x))h\\&=\rho_m h+w(q_x)+q_\mu-\phi_w(q_x)h\end{align*}
by \eqref{cocycl}.

To show that $\Cbar_\w^\ulu(f)$ does not depend on the presentation of $\w$ let us check the braid and quadratic relation.
It is enough to consider the case $n=3$, $q_x=\alpha_1x_1+\alpha_2x_2+\alpha_3x_3$, $\alpha_i\in \C^{r+1}$.
Examining $\Cbar_1^{\nu_1}\Cbar_2^{\nu_2}\Cbar_1^{\nu_3}(f)$ we find that admissible values of $\nu_i$ are the following
$$\nu_1=\alpha_2-\alpha_3\,,\qquad \nu_2=\alpha_1-\alpha_3\,,\qquad\nu_3=\alpha_1-\alpha_2\,,$$
(written in the additive notation).
For the admissible operation $\Cbar_2^{\mu_1}\Cbar_1^{\mu_2}\Cbar_2^{\mu_3}(f)$ we have
$$\mu_1=\alpha_1-\alpha_2\,,\qquad \mu_2=\alpha_1-\alpha_3\,,\qquad\mu_3=\alpha_2-\alpha_3\,.$$
 The braid relation \eqref{redbraid} applies.
For the quadratic relation 
$$\Cbar_1^{\nu_1}\Cbar_1^{\nu_2}(f)=f$$
we note that if $q_x=\alpha_1x_1+\alpha_2x_2$, then $\nu_2=\alpha_1-\alpha_2$. Since $s_1(\alpha_1x_1+\alpha_2x_2)=\alpha_2x_1+\alpha_1x_2$ then $\nu_1=\alpha_2-\alpha_1=-\nu_2$.
The quadratic relation \eqref{redkwadrat} applies. Hence, $\Cbar_i^\ulu\Cbar_i^\ulu(f)=f$.
\end{proof}

\section{Elliptic classes of link patterns}\label{section-ellclasses}
We construct elliptic classes of link patterns inductively applying the action of $\Cc_i^\ulu$. The starting point is the Borisov-Libgober class
$$\Ep(\pa^{\min}_{m,r}):=\Elclas(X^{\min}_{m,r},\Sigma_{i=1}^r\lambda_iD_i)$$
 given by \eqref{poczatek}.
 Suppose that a link pattern $\pa$ is obtained by applying a permutation $w\in \Spe_m$ to $\pa^{\min}_{m,r}$. We assume that $w$ has a minimal length  among $w\in \Spe_m$ such that  $\pa=w \pa^{\min}_{m,r}$.
 We define \begin{equation}\label{eppadef}\Ep(\pa)~=~\Cc^\ulu_w(\Ep(\pa^{\min}_{m,r}))\,.\end{equation} To show that the definition does not depend on the choice of the permutation $w$ we analyze the elementary transformation of the elliptic class given by $\Cc^\ulu_i$. The first step is to trace how the type of the elliptic class is affected.

 \begin{example}\rm We keep the notation of \S\ref{beginning}: 
 $\mu_i=h^{1-\lambda_i}$.
The elliptic class of the link pattern 
 $\pa^{\min}_{8,2}$
is equal to
$$\delta\big(u\tfrac{x_1}{x_7},\mu_1\big)\delta\big(u\tfrac{x_2}{x_8},\mu_2\big)\delta\big(u\tfrac{x_1}{x_8},h\big).$$
It has the type equal to
$$(u+x_1-x_7)\mu_1+(u+x_2-x_8)\mu_2+(u+x_1-x_8)h.$$\end{example}
In general, \begin{equation*}\type(\Ep(\pa^{\min}_{m,r}))=\sum _{i=1}^r\left((u+x_i-x_{m-r+i})\mu_i+\sum_{j=m-r+i+1}^m (u+x_i-x_j)h\right)
\end{equation*}
The coefficient of $x_i$ in $$\type(\Ep(\pa^{\min}_{m,r})) -h\rho_m
$$
is equal to
\begin{equation}\label{wpolczynnikixi}\vvv(i)=\begin{cases}
rh+\mu_i& \text{ for }1\leq i\leq r,\\
ih& \text{ for }r+1\leq i\leq m-r,\\
(m-r+1)h-\mu_{i-m+r}& \text{ for }m-r+1\leq i\leq m\,.\end{cases}\end{equation}
We label the nodes of the link pattern with values $\vvv(i)$ written multiplicatively. In the example  above
\begin{equation}\label{przykladnumerowania}
\xymatrix@-1.4pc{
\mu_1h^2\ar@{..}[r]\ar@{<-}@/^1.8pc/^{\mu_1\phantom{mmm}}[rrrrrr]
&
\mu_2h^2\ar@{..}[r]\ar@{<-}@/^1.8pc/^{\phantom{mmmm}\mu_2}[rrrrrr]
&
h^3\ar@{..}[r]
&	
h^4\ar@{..}[r]
&
h^5\ar@{..}[r]
&
h^6\ar@{..}[r]
&
\tfrac{h^7}{\mu_1}\ar@{..}[r]
&\tfrac{h^7}{\mu_2}\,.
}
\end{equation}
Action of the permutation rearranges the values $\vvv(i)$. For example
applying the permutation
$$w:\quad 1\mapsto 3,\quad 2\mapsto 6,\quad 3\mapsto 1,\quad 4\mapsto 2,\quad 5
\mapsto 5,\quad 6\mapsto 8,\quad 7\mapsto 4,\quad 8\mapsto 7$$
to \eqref{przykladnumerowania} we obtain the link pattern
\begin{equation}\label{przykladnumerowania2}
\xymatrix@-1.4pc{
&
h^3\ar@{..}[r]
&
h^4\ar@{..}[r]
&
\mu_1h^2\ar@{..}[r]\ar@{<-}@/^2.4pc/^{\mu_1}[rrrr]
&
\tfrac{h^7}{\mu_2}\ar@{..}[r]
\ar@{->}@/^2pc/_{\mu_2}[rr]
&
h^5\ar@{..}[r]
&
\mu_2h^2\ar@{..}[r]
&
\tfrac{h^7}{\mu_1}\ar@{..}[r]
&h^6\,.
}
\end{equation} 
From this presentation it is easy to read admissible $\nu$ of the operation $\Cc^\nu_i$:
if $\pa=w\cdot \pa^{\min}_{m,r}$, namely 
$$\nu=\vvv(w(i))-\vvv(w(i+1))\qquad\text{(written additively)}\,,$$
by Lemma \ref{wzor_na_nu}.
Note that if we transpose consecutive loose nodes then the operation $\Cbar^\ulu_i$ is not defined. Indeed, $\nu=h^k/h^{k+1}=h^{-1}$  
and the normalizing factor \eqref{normalizacja} would be $\delta(h^{-1},h)=\frac{\vt(1)}{\vt(h^{-1})\vt( h)}$, hence equal to zero.

\begin{example}\rm Let us apply the operation $\Cc_3^\ulu$ to $\Ep(\pa_{8,2}^{\min})$. We read weights from \eqref{przykladnumerowania}.
Since $\Ep(\pa_{8,2}^{\min})=s_3(\Ep(\pa_{8,2}^{\min}))$ and $\nu=h^{-1}$ we have
$$\Cc_3^\ulu(\Ep(\pa_{8,2}^{\min}))=\delta\big(\tfrac{x_4}{x_3},h^{-1})\Ep(\pa_{8,2}^{\min})+\delta\big(\tfrac{x_3}{x_4},h)\Ep(\pa_{8,2}^{\min})=0\,.$$
Similarly, we apply $s_1$ to the pattern \eqref{przykladnumerowania2}. Since $s_1w=ws_3$ we have $\Cc_1^\ulu(\Ep(w\pa^{\min}_{m,r}))=
\Cc_w^\ulu\Cc^\ulu_3(\Ep(\pa^{\min}_{m,r}))=0\,.$
\end{example}

\section{Six moves increasing orbit dimension}
We present a simple way to determine the parameter $\nu$ for which $\Cc^\nu_i$ acting on $\Cc^\ulu_w(\pa^{\min}_{r,m})$ is admissible. 
On the pictures there are presented relevant elementary transformations of link patterns. The parameters $\alpha$ and $\beta$  attached to  
arcs belong to $(\C^*)^{r}$. After applying elementary transposition $s_i$ the configuration changes from $\pa$ to $s_i\pa$, but the parameters of arcs remain the same. At the right there is given the admissible value of parameter $\nu$ such that $\Cc_i^\nu(\Ep(\pa))$ is pure, and hence $\Cc_i^\nu(\Ep(\pa))=\Ep(s_i\pa)$.

{\parindent0pt
\def\duzyskip{\vskip30pt}
$
\xymatrix@-1.4pc{
_\bullet\ar@{..}[r]
&
_\bullet\ar@ {~}[r]_{s_i}
\ar@{<-}@/^1.8pc/^\alpha[rrr]
&
_\bullet\ar@{..}[r]
\ar@{<-}@/^1.8pc/^\beta[rrrr]&
_\bullet\ar@{..}[r]
&
_\bullet\ar@{..}[r]&
_\bullet\ar@{..}[r]
&
_\bullet\ar@{..}[r]
&_\bullet
&\ar@{->}[rr]
&&&
_\bullet\ar@{..}[r]
&
_\bullet\ar@ {~}[r]_{s_i}
\ar@{<-}@/^1.8pc/^\beta[rrrrr]
&
_\bullet\ar@{..}[r]
\ar@{<-}@/^1.4pc/_\alpha[rr]
&
_\bullet\ar@{..}[r]
&
_\bullet\ar@{..}[r]&
_\bullet\ar@{..}[r]
&
_\bullet\ar@{..}[r]
&_\bullet
}
$
\hfil $\nu=\frac {\alpha h^r}{\beta h^r}=\frac \alpha\beta$
\duzyskip

$
\xymatrix@-1.4pc{
_\bullet\ar@{..}[r]
&
_\bullet\ar@{..}[r]
\ar@{<-}@/^1.8pc/^\alpha[rrrr]
&
_\bullet\ar@{..}[r]
&
_\bullet\ar@{..}[r]
\ar@{<-}@/^1.8pc/^\beta[rrr]&
_\bullet\ar@{..}[r]&
_\bullet\ar@ {~}[r]_{s_i}
&
_\bullet\ar@{..}[r]
&_\bullet
&\ar@{->}[rr]
&&&
_\bullet\ar@{..}[r]
&
_\bullet\ar@{..}[r]
\ar@{<-}@/^1.8pc/^\alpha[rrrrr]
&
_\bullet\ar@{..}[r]
&
_\bullet\ar@{..}[r]
\ar@{<-}@/^1.4pc/_\beta[rr]&
_\bullet\ar@{..}[r]&
_\bullet\ar@ {~}[r]_{s_i}
&
_\bullet\ar@{..}[r]
&_\bullet
}
$
\hfil $\nu=\frac{h^{m-r+1}/ \alpha}{h^{m-r+1}/ \beta}=\frac\beta\alpha$

\duzyskip
$
\xymatrix@-1.4pc{
_\bullet\ar@{..}[r]
&
_\bullet\ar@{..}[r]
\ar@{<-}@/^1.8pc/^\alpha[rrr] 
&
_\bullet\ar@{..}[r]
&
_\bullet\ar@ {~}[r]_{s_i}
\ar@{<-}@/^1.8pc/^\beta[rrr]&
_\bullet\ar@{..}[r]&
_\bullet\ar@{..}[r]
&
_\bullet\ar@{..}[r]
&_\bullet
&\ar@{->}[rr]
&&&
_\bullet\ar@{..}[r]
&
_\bullet\ar@{..}[r]
\ar@{<-}@/^1.8pc/^\alpha[rr]
&
_\bullet\ar@{..}[r]
&
_\bullet\ar@ {~}[r]_{s_i}
&
_\bullet\ar@{..}[r]\ar@{<-}@/^1.8pc/^\beta[rr]
&
_\bullet\ar@{..}[r]
&
_\bullet\ar@{..}[r]
&_\bullet
}
$
\hfil $\nu=\frac {\alpha h^r}{h^{m-r+1}/\beta}=\frac {\alpha\beta}{h^{m-2r+1}}$

\duzyskip
$
\xymatrix@-1.4pc{
_\bullet\ar@{..}[r]
&
_\bullet\ar@{..}[r]
&
_\bullet\ar@{..}[r]
&
_\bullet\ar@ {~}[r]_{s_i}
\ar@{<-}@/^1.8pc/^\alpha[r]
&
_\bullet
&
_\bullet\ar@{..}[r]
&
_\bullet\ar@{..}[r]
&_\bullet
&\ar@{->}[rr]
&&&
_\bullet\ar@{..}[r]
&
_\bullet\ar@{..}[r]
&
_\bullet\ar@{..}[r]
&
_\bullet\ar@ {~}[r]_{s_i}\ar@{->}@/^1.8pc/^\alpha[r]
&
_\bullet\ar@{..}[r]
&
_\bullet\ar@{..}[r]
&
_\bullet\ar@{..}[r]
&_\bullet
}
$
\hfil $\nu=\frac {\alpha h^r}{h^{m-r+1}/\alpha}=\frac {\alpha^2}{h^{m-2r+1}}$

\duzyskip
$
\xymatrix@-1.4pc{
_\bullet\ar@{..}[r]
&
_\bullet\ar@{~}[r]_{s_i}
\ar@{<-}@/^1.8pc/^\alpha[rrrrr]
&
_{h^k}\ar@{..}[r]
&
_\bullet\ar@{..}[r]
&
_\bullet\ar@{..}[r]&
_\bullet\ar@{..}[r]
&
_\bullet\ar@{..}[r]
&_\bullet
&\ar@{->}[rr]
&&&
_\bullet\ar@{..}[r]
&
_{h^k}\ar@ {~}[r]_{s_i}
&
_\bullet\ar@{..}[r]
\ar@{<-}@/^1.8pc/^\alpha[rrrr]
&
_\bullet\ar@{..}[r]
&
_\bullet\ar@{..}[r]&
_\bullet\ar@{..}[r]
&
_\bullet\ar@{..}[r]
&_\bullet}
$
\hfil $\nu=\frac {\alpha h^r}{h^{k}}=\frac \alpha{h^{k-r}}$
\duzyskip
$
\xymatrix@-1.4pc{
_\bullet\ar@{..}[r]
&
_\bullet\ar@{..}[r]
\ar@{<-}@/^1.8pc/^\alpha[rrrrr]
&
_\bullet\ar@{..}[r]
&
_\bullet\ar@{..}[r]
&
_\bullet\ar@{..}[r]&
_{h^k}\ar@ {~}[r]_{s_i}
&
_\bullet\ar@{..}[r]
&_\bullet
&\ar@{->}[rr]
&&&
_\bullet\ar@{..}[r]
&
_\bullet\ar@{..}[r]
\ar@{<-}@/^1.8pc/^{\alpha}[rrrr]
&
_\bullet\ar@{..}[r]
&
_\bullet\ar@{..}[r]
&
_\bullet\ar@{..}[r]&
_\bullet\ar@ {~}[r]_{s_i}
&
_{h^k}\ar@{..}[r]
&_\bullet
}
$
\hfil $\nu=\frac {h^k}{h^{m-r+1}/\alpha }=\frac {\alpha}{h^{m-r-k+1}}$
}
\medskip

The transformations with linking patterns having reversed one or more arrows obey similar rules.
Note that starting from the link pattern $\pa^{\min}_{m,r}$ we never have $\nu=1$, which would made the operation $\Cc^\nu_i$ impossible. 

\begin{corollary}Let 
$f=\Ep(\pa^{\min}_{m,r})$.
Suppose $s_{i_1}s_{i_2}\dots s_{i_\ell}$ is a reduced decomposition of $w\in \Spe_m$.
Then  the composition
$$\Cc_{i_1}^{\nu_1}\Cc_{i_2}^{\nu_2}\dots \Cc_{i_\ell}^{\nu_\ell}(f)$$
is well-defined and does not depend on the word decomposition.
Morover if
$$\type(f)=q_x+\rho_m h+q_\mu\,,$$ then
$$\type\big(\Cc_w^\ulu(f)\big)=
w(q_x)+\rho_m h+q_\mu\,.$$ 
\end{corollary}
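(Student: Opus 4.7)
The plan is to prove the three assertions -- well-definedness, independence from the reduced decomposition, and the type formula -- directly from Lemma \ref{wzor_na_nu} (admissibility), Theorem \ref{identities} (braid and quadratic relations for $\Cc$), and a short induction.

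Well-definedness amounts to $\nu_k\not\equiv 1$ for each $k$, since that is the only tautological pole of $\Cc^{\nu_k}_{i_k}$. By Lemma \ref{wzor_na_nu}, $\nu_k$ is the quotient of the two labels attached to positions $i_k,i_k+1$ of the link pattern obtained after applying the tail $s_{i_{k+1}}\cdots s_{i_\ell}$. The labels of $\pa^{\min}_{m,r}$ read off from \eqref{wpolczynnikixi} are pairwise distinct as functions of $\mu_1,\ldots,\mu_r,h$, and permutations merely reshuffle them, so no two labels ever coincide -- this is the content of the remark closing the preceding section.

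For the type formula I would argue by induction on $\ell$. Let $g=\Cc_{i_2}^{\nu_2}\cdots\Cc_{i_\ell}^{\nu_\ell}(f)$ and suppose inductively that $\type(g)=v(q_x)+\rho_m h+q_\mu$ with $v=s_{i_2}\cdots s_{i_\ell}$. By Lemma \ref{wzor_na_nu}, admissibility forces $\nu_1=\partial_{i_1}(v(q_x))$. Expanding $\Cc_{i_1}^{\nu_1}(g)$ via \eqref{elldemazur}, the off-diagonal summand has type $s_{i_1}(v(q_x))+s_{i_1}(\rho_m)h+q_\mu+(x_{i_1}-x_{i_1+1})h$. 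The identity $s_i(\rho_m)=\rho_m-(x_i-x_{i+1})$ makes the two $h$-terms cancel, so this type equals $s_{i_1}(v(q_x))+\rho_m h+q_\mu=w(q_x)+\rho_m h+q_\mu$. Purity then forces the diagonal summand to have the same type, completing the step. Unlike the $\Cbar$ formula of Theorem \ref{wyznaczenienu}, no $\phi_w$-correction appears here: the normalization factor $\deltm(\nu_k,h)^{-1}$ of $\Cbar_i^{\nu_k}$ is precisely what produces that correction.

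Independence from the reduced decomposition is handled via Matsumoto's theorem, reducing to invariance under commutation and braid moves. A commutation $s_is_j\leftrightarrow s_js_i$ with $|i-j|>1$ preserves the admissible parameters $\partial_i(q_x),\partial_j(q_x)$ at that stage, and $\Cc_i^\mu,\Cc_j^\nu$ commute. For a braid move at positions $k,k+1,k+2$ with intermediate $q_x$-coefficients $\alpha,\beta,\gamma$, applying the three operations right to left and tracking the $s_{i_k}$-action on $q_x$ yields the parameter triple $(\alpha-\beta,\alpha-\gamma,\beta-\gamma)$ on one side and $(\beta-\gamma,\alpha-\gamma,\alpha-\beta)$ on the other. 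Both satisfy the additive relation ``middle $=$ sum of outer'', matching the shape of the braid relation \eqref{braid}; hence Theorem \ref{identities} applies and the two compositions agree. The main subtlety is verifying that this numerical relation holds automatically from the dynamically determined admissibility, but that is precisely the computation just indicated.
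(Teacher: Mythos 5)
Your proposal is correct and follows essentially the paper's approach: the paper's one-line proof just invokes Matsumoto's theorem ([BjBr, Theorem 3.3.1]) together with the braid relations already established in Theorem \ref{identities}, with well-definedness and the type formula relying implicitly on the preceding analysis (Lemma \ref{wzor_na_nu}, the distinctness of the labels $\vvv(i)$, and the type computation done for $\Cbar$ in Theorem \ref{wyznaczenienu}). You have spelled these steps out explicitly, and your observation that the $\phi_w(q_x)h$ correction of Theorem \ref{wyznaczenienu} is exactly absorbed by the normalizing factors $\deltm(\nu_k,h)^{-1}$ is the right way to see why the unnormalized type formula is cleaner.
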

\begin{proof} We note that for any two reduced decompositions of $w$ one can pass from one to another applying braid relations, 
\cite[Theorem 3.3.1]{BjBr}. \end{proof}

If $w$ preserves the order of loose nodes, then transposition of strings labelled by a power of $h$ never appears.
Hence, the admissible parameters $\nu_i$ are never equal to $h^{-1}$. Hence,  the reduced operations   $\Cbar^{\nu_i}_i$ are defined. 
Moreover, one does not have to assume that the word representing $w$ is reduced, but only that the strings coming from the loose nodes in the course of applications of $s_{i_j}$ do not cross.

\begin{corollary}
Assume that $w$ preserves the order of loose nodes.
Then  $\Cbar^\ulu_w(\Ep(\pa^{\min}_{m,r}))$ is well-defined and it
depends only on $w$.
\end{corollary}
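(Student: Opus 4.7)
My plan is to reduce the corollary to Theorem \ref{wyznaczenienu}, which already asserts that $\Cbar^\ulu_w(f)$ is independent of the presentation of $w$ provided every admissible parameter $\nu_k$ satisfies $h\nu_k\neq 1$, so that $\deltm(\nu_k,h)\neq 0$ and each normalized operation $\Cbar^{\nu_k}_{i_k}$ is defined. Thus the only thing to verify is this non-degeneracy condition under the hypothesis that $w$ preserves the order of the loose nodes.

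First I would pin down precisely when the forbidden value $\nu_k=h^{-1}$ can arise. By Lemma \ref{wzor_na_nu} the admissible $\nu$ for $s_i$ acting on $\Cc^\ulu_{w'}(\Ep(\pa^{\min}_{m,r}))$ is multiplicatively the ratio $\vvv(w'(i))/\vvv(w'(i+1))$ of the node labels listed in \eqref{wpolczynnikixi} and illustrated in \eqref{przykladnumerowania}. Inspecting the six move types case by case, whenever at least one of the two adjacent nodes is an endpoint of an arrow the resulting $\nu$ carries a nontrivial $\mu$-factor in its numerator or denominator, and so cannot equal $h^{-1}$ for generic parameters. The only remaining possibility is a swap of two loose nodes carrying consecutive powers $h^k$ and $h^{k+1}$, which was already singled out in the remark following \eqref{przykladnumerowania2} as the unique obstruction to defining $\Cbar^\nu_i$.

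Next I would exhibit a presentation of $w$ in which no such swap occurs. Choosing any reduced decomposition $w=s_{i_1}s_{i_2}\cdots s_{i_\ell}$, the associated wiring diagram has no crossing between two loose-node strings, precisely because $w$ preserves the order of the loose nodes. Hence no intermediate factor $s_{i_k}$ ever transposes two loose nodes, and consequently $\nu_k\neq h^{-1}$ for every $k$. Each $\Cbar^{\nu_k}_{i_k}$ is then defined, and Theorem \ref{wyznaczenienu} delivers both the well-definedness of $\Cbar^\ulu_w(\Ep(\pa^{\min}_{m,r}))$ and its independence of the chosen decomposition; the extension to non-reduced words whose loose-node strings still avoid one another comes for free from the same theorem.

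The main obstacle, and essentially the only case-dependent part of the argument, is the first step: certifying by inspection of all six move types that $\nu=h^{-1}$ cannot appear as soon as the transposition involves at least one arrow endpoint. Once this combinatorial fact is secured, the corollary is an immediate consequence of the algebraic framework assembled in Section \ref{rozdzialalgebra}.
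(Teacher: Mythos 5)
Your argument is correct and follows the same route as the paper's own (which is embedded in the two sentences directly preceding the corollary): the only obstruction to defining $\Cbar^{\nu_k}_{i_k}$ is $\nu_k=h^{-1}$, which can arise only by transposing two loose nodes labelled by consecutive powers of $h$; since $w$ preserves the order of the loose nodes, a reduced word for $w$ has no crossings between loose-node strings, so this never happens, and Theorem \ref{wyznaczenienu} then gives independence of the presentation. One small stylistic point: the phrase ``cannot equal $h^{-1}$ for generic parameters'' is unnecessarily weak --- once a $\mu$-variable genuinely appears in $\nu_k$ (as the six-move inspection shows), $\nu_k$ is not a power of $h$ at all as a character, so the inequality is formal rather than generic; this is worth stating cleanly since the identities are meant to hold identically in the $\mu$-variables.
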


\section{Independence of the result of $\Cbar^\ulu_w$ presentation}\label{niezaleznosc}

We prove independence of the elliptic class from the presentation of the link pattern. First we analyze the reduced operations $\Cbar^\ulu_i$.
\begin{theorem} \label{niezalezy} 
Let  $\pa$ be a  labelled link pattern of rank $r$ and  let $w\in \Spe_m$, $\sigma\in \Spe_r$ be  permutations such that $\pa=\sigma^\mu w\pa^{\min}_{m,r}$.
We assume that $w$ preserves the order of the loose nodes.
Then  $\Cbar^\ulu_w(\Ep(\pa^{\min}_{m,r}))$ is defined 
and the result
$$\sigma^\mu\Cbar_w^\ulu(\Ep(\pa^{\min}_{m,r}))$$
does not depend on the choice of $\sigma$ and $w$.
\end{theorem}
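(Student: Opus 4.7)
The plan is to reduce the claimed independence to verifying a single "stabilizer identity" on $\Ep(\pa^{\min}_{m,r})$, and then to deduce that identity from the flip relation \eqref{flip} combined with the quadratic relation \eqref{redkwadrat}. First, since $w$ preserves the order of the loose nodes, the Six Moves analysis of \S\ref{tabela} guarantees that at every stage of any reduced expression of $w$ the admissible parameter $\nu_k$ differs from $h^{-1}$, so $\deltm(\nu_k,h)\neq 0$ and each $\Cbar^{\nu_k}_{i_k}$ is defined; combined with Theorem \ref{wyznaczenienu}, this shows that $\Cbar^\ulu_w(\Ep(\pa^{\min}_{m,r}))$ depends only on $w$ itself.

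Next I would identify the ambiguity. Two admissible pairs $(w_1,\sigma_1)$ and $(w_2,\sigma_2)$ presenting the same labelled link pattern differ by an element of the stabilizer of $\pa^{\min}_{m,r}$ in $\Spe_m\times\Spe_r$. Under the order-preserving hypothesis every loose node of $\pa^{\min}_{m,r}$ is individually fixed, and a direct check shows the stabilizer is isomorphic to $\Spe_r$, generated by the elements $\vartheta_k=(s_ks_{m-r+k},\,s_k)$ for $1\leq k<r$: a simultaneous swap of sources $k,k+1$ and of targets $m-r+k,m-r+k+1$, compensated by the label transposition $s_k$. Hence the theorem reduces to checking, for each such $k$, the stabilizer identity
$$s_k^\mu\,\Cbar^\ulu_{s_ks_{m-r+k}}\bigl(\Ep(\pa^{\min}_{m,r})\bigr)=\Ep(\pa^{\min}_{m,r}).$$

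Setting $\alpha=\mu_k/\mu_{k+1}$, Lemma \ref{wzor_na_nu} identifies the admissible parameters as $\alpha$ for $\Cbar_k$ and $1/\alpha$ for $\Cbar_{m-r+k}$, and since $m-r\geq 2$ the two operators commute. Commuting $s_k^\mu$ past $\Cbar^{1/\alpha}_{m-r+k}$ (which flips its parameter to $\alpha$ and commutes with $s_{m-r+k}$ on $x$-variables) yields
$$s_k^\mu\,\Cbar^{1/\alpha}_{m-r+k}\Cbar^{\alpha}_k\,\Ep(\pa^{\min}_{m,r})=\Cbar^{\alpha}_{m-r+k}\,\bigl(s_k^\mu\Cbar^{\alpha}_k\,\Ep(\pa^{\min}_{m,r})\bigr).$$
The flip relation \eqref{flip}, after dividing by the normalization factor $\deltm(\alpha,h)$ and tracking the substitution $\deltm(\alpha,h)\mapsto\deltm(1/\alpha,h)$ effected by $s_k^\mu$, reads $s_k^\mu\,\Cbar^{\alpha}_k\,\Ep(\pa^{\min}_{m,r})=\Cbar^{1/\alpha}_{m-r+k}\,\Ep(\pa^{\min}_{m,r})$. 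Substituting into the previous display and invoking $\Cbar^{\alpha}_{m-r+k}\Cbar^{1/\alpha}_{m-r+k}=\id$ from \eqref{redkwadrat} recovers $\Ep(\pa^{\min}_{m,r})$ exactly, completing the generator check. Invariance under the whole stabilizer follows formally from the braid and quadratic identities for $\Cbar^\ulu_i$ proved in Theorem \ref{wyznaczenienu}.

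The main obstacle is the normalization bookkeeping in converting \eqref{flip} from its unnormalized form into the clean relation $s_k^\mu\,\Cbar^{\alpha}_k\,\Ep=\Cbar^{1/\alpha}_{m-r+k}\,\Ep$: the admissible parameters on the two sides are reciprocal, so one must verify that the $s_k^\mu$-twist of the normalizing factor $\deltm(\alpha,h)$ matches precisely the denominator $\deltm(1/\alpha,h)$ that appears on the right-hand side, leaving no residual scalar. Once this bookkeeping is spelled out, the argument is purely formal, propagating from the generators $\vartheta_k$ to the full stabilizer by the relations already at our disposal.
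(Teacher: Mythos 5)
Your proposal is correct and follows essentially the same route as the paper: reduce the ambiguity to the stabilizer of $\pa^{\min}_{m,r}$ inside $\Spe_m\times\Spe_r$ (with $w$ fixing the loose nodes), identify its generators $\vartheta_k=(s_ks_{m-r+k},s_k)$, and verify the generator identity via the flip relation of \S\ref{basicexample} together with the quadratic relation. Your write-up is slightly more explicit than the paper's (which reduces to the $m=4,\ r=2$ case and cites the geometric computation of \S\ref{basicexample}): you spell out how commuting $s_k^\mu$ through $\Cbar^{1/\alpha}_{m-r+k}$ and applying the normalized flip relation $s_k^\mu\Cbar_k^\alpha\Ep=\Cbar^{1/\alpha}_{m-r+k}\Ep$ reduces the check to $\Cbar^\alpha_{m-r+k}\Cbar^{1/\alpha}_{m-r+k}=\id$, which is a clean and correct unpacking of the same idea.
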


\begin{proof}
The product of groups $\Spe_m\times\Spe_r$ acts on the set of labelled link patterns. Suppose $\sigma_1^\mu w_1(\pa^{\min}_{m,r})=\sigma_2^\mu w_2(\pa^{\min}_{m,r})$ and both $w_1,w_2$ preserve the order of loose nodes. Then $w_1^{-1}w_2$ preserves the order of the loose nodes and
$(\sigma_1^\mu)^{-1}\sigma_2^\mu w_1^{-1} w_2(\pa^{\min}_{m,r})=\pa^{\min}_{m,r}$.
Hence,
it is enough to show that if $(\sigma,w)$ stabilizes the labelled link pattern $\pa^{\min}_{m,r}$ and $w$ preserves the order of the loose nodes (so  it is constant on the loose nodes), then $\sigma^\mu \Cbar^\ulu_w(\Ep(\pa^{\min}_{m,r}))=\Ep(\pa^{\min}_{m,r})$. 

The stabilizer of  $\pa^{\min}_{m,r}$ and loose nodes is generated by the simultaneous transpositions of arcs and labels:
$$\sigma=s_i\,,\quad w=s_is_{m-r+i}\,.$$
The calculus involves only the variables $x_i,x_{i+1},x_{m-r+i},x_{m-r+i+1},\mu_i,\mu_{i+1}$ it is enough to check the result of the action for $m=4$,$r=2$, $i=1$, as in \eqref{przyklad42} since:
$$\pa^{\min}_{m,r}={\mathcal M}\cdot\big((\pa^{\min}_{4,2})_{x_1:=x_i,\,x_2:=x_{i+1},\,x_3:=x_{m-r+i},\,x_4:=x_{m-r+i+1},\,\mu_1:=\mu_i,\,\mu_2:=\mu_{i+1}}
\big)$$
and ${\mathcal M}$ is a  symmetric function with respect to $x_i\leftrightarrow x_{i+1},\,x_{m-r+i}\leftrightarrow x_{m-r+i+1}$, does not depend on $\mu_i$, $\mu_{i+1}$.
The computation is carried out via a direct verification based on geometric considerations in Section \ref{basicexample}.
\end{proof}

\begin{remark}\rm Note that the set of permutations preserving the order of the loose nodes is not a group, hence in the argument above we had to analyze the composition $w_1^{-1} w_2$ to have a permutation fixing loose nodes.\end{remark}

\section{The basic example -- the flip relation}
\label{basicexample}
The purpose of this section is mainly to give a geometric proof of the four-term relation \cite[eq. (2.7)]{RTV} which plays a role in the proof of Theorem \ref{niezalezy}  for the elliptic algebra.
\medskip

Consider the minimal link pattern $\pa^{\min}_{4,2}$ and the corresponding $\B_4$-orbit in $\Hom(\C^4,\C^4)$
$$(X_{4,2}^{\min})^ \circ=B_4\cdot\hbox{\small$\begin{pmatrix}0&0&1&0\\0&0&0&1\\0&0&0&0\\0&0&0&0\end{pmatrix}$}=\left\{\hbox{\small$\begin{pmatrix}
0&0&a&b\\0&0&0&c\\0&0&0&0\\0&0&0&0\end{pmatrix}$}\;:\;a\neq 0\,,~~ c\neq 0\right\}\,.$$
The closure $X_{4,2}^{\min}$ is isomorphic to $\C^3$.
The effect of the actions of $s_1$ and $s_3$ are  equal to the $\B_4$ orbit 
$$X^\circ:=\B_4s_1
\cdot (X_{4,2}^{\min})^\circ
=\B_4s_3
\cdot (X_{4,2}^{\min})^\circ=\left\{\hbox{\small$\begin{pmatrix}
0&0&s&t\\0&0&u&v\\0&0&0&0\\0&0&0&0\end{pmatrix}$}\;:\;u\neq 0\,,~~ sv-tu\neq 0\right\}\,.$$ The closure $X=\overline{X^\circ}$ is isomorphic to $\C^4$.
Let $P_i\subset \GL_4$ be the minimal parabolic subgroup generated by $\B_4$ and $s_i$. We have two Bott-Samelson type resolutions, of the pair $(X,\partial X)$: 
$$Z_1=P_1\times_{\B_4}X~~\text{ and }~~Z_3=P_3\times_{\B_4}X\,.$$
Let us analyze  the first one. There are two charts: 
\begin{equation}\label{par1}\hbox{\small$\begin{pmatrix}
1&0&0&0\\z&1&0&0\\0&0&1&0\\0&0&0&1\end{pmatrix}$}\cdot\hbox{\small$\begin{pmatrix}
0&0&a&b\\0&0&0&c\\0&0&0&0\\0&0&0&0\end{pmatrix}$}=\hbox{\small$\begin{pmatrix}
0&0&a&b\\0&0&az&bz+c\\0&0&0&0\\0&0&0&0\end{pmatrix}$}\end{equation}
and
\begin{equation}\label{par2}\hbox{\small$\begin{pmatrix}
z&1&0&0\\1&0&0&0\\0&0&1&0\\0&0&0&1\end{pmatrix}$}\cdot\hbox{\small$\begin{pmatrix}
0&0&a&b\\0&0&0&c\\0&0&0&0\\0&0&0&0\end{pmatrix}$}=\hbox{\small$\begin{pmatrix}
0&0&az&c+bz\\0&0&a&b\\0&0&0&0\\0&0&0&0\end{pmatrix}$}\,.\end{equation}
Consider the following divisor on $X$
$$D=(1-\alpha) D_1+(1-\beta )D_2\,,\quad\text{ where }D_1=\{u=0\} \text{ and } D_2=\{sv-tu=0\}\,.$$
We explain  the meaning of the computation below. According to the definition of the elliptic class of a singular variety (see \cite{BoLi}, \cite[\S2.3]{RiWe1}), we choose a resolution of the singular pair
$$f:(Z,\widetilde D)\longrightarrow (X,D).$$
In our case, this is a small resolution of the three-dimensional quadratic cone. The resolution is $\T$-equivariant. The parametrizations of neighbourhoods of the fixed points in $Z$ are given by \eqref{par1} and \eqref{par2}. In these coordinates, the inverse images of the boundary divisors have simple normal crossings; they are given by the vanishing of coordinates. The multiplicities are obtained by applying pull-back intertwined with Grothendieck duality:
$$\widetilde D=f^*(K_X+D)-K_Z\,.$$
This calculation is easily carried out by interpreting a section of $K_X+D$ as a differential form with the prescribed poles along $D$.
Finally, to compute the restriction of the elliptic class at the fixed point corresponding to $X=\Hom(\C^m,\C^m)$, we apply the localization theorem; that is, we sum the contributions of the fixed points in $Y$.
\medskip

Here are the details:
we pull-back of $D$ via $f:Z_1\to X$ and check that it is a normal crossing divisor:
$$\widetilde D=f^*(K_X+D)-K_{Z_1}=(1-\alpha)\{az=0\}+(1-\beta) \{ac=0\}\,.$$
Indeed, in the first chart we have
\begin{multline*}f^*\left(u^{\alpha-1}(sv-tu)^{\beta-1}ds\wedge dt\wedge du\wedge dv\right)=(az)^{\alpha-1}(ac)^{\beta-1}
(a\,dz\wedge da\wedge db\wedge  dc)\\=z^{\alpha-1}a^{\alpha+\beta-1}c^{\beta-1} dz\wedge da\wedge db\wedge dc\,.
\end{multline*}
In the second chart 
\begin{multline*}
f^*\left(u^{\alpha-1}(sv-tu)^{\beta-1)}ds\wedge dt\wedge du\wedge dv\right)
=a^{\alpha-1} (-ac)^{\beta-1}(a \,dz\wedge da\wedge db\wedge  dc)\\\sim
a^{\alpha+\beta-1}  c^{\beta-1}dz\wedge da\wedge db\wedge  dc\,.\end{multline*}

We compute the localized equivariant Borisov-Libgober class 
using localization formula for the torus action. We skip the computation of the torus weights, and we refer to the almost identical computations for fundamental, CSM and motivic Chern classes \cite[Th.~4.1, Th.~5.1]{RuWe}. 
The result is
\begin{equation}\label{rownanie42}\Elclas(X,D)=\delta\big(\tfrac{x_2}{x_1},h^\alpha\big)\Elclas(X^{\rm min}_{4,2},D_0)+\delta\big(\tfrac{x_1}{x_2},h\big)s_1\Elclas(X^{\rm min}_{4,2},D_0)\,,\end{equation}
where 
$$D_0=(1-\alpha-\beta)\{a=0\}+(1-\beta)\{c=0\}\,.$$
The first summand of \eqref{rownanie42} is equal to the contribution of the fixed point from the first chart (there the component $\{z=0\}$ has the coefficient $1-\alpha$). In the second chart the component 
$\{z=0\}$ has coefficient 0, it does not enter into the boundary divisor. Hence the factor $\delta\big(\tfrac{x_1}{x_2},h\big)$ in the second summand.
The multiplicities   of at the boundary divisor are related to the $\mu_i$ variables by the formula \eqref{muintro} hence
 \begin{equation}\label{pierwsze}\mu_1=h^{1-(1-\alpha-\beta)}=h^{\alpha+\beta}\,,\qquad  \mu_2=h^{1-(1-\beta)}=h^{\beta}\end{equation}
and  the admissible parameter is equal to $$\frac{\mu_1}{\mu_2}=h^\alpha\,.$$
In terms of the Demazure operators \eqref{rownanie42} reads

$$\Elclas(X,D)=\Cc_1^{\mu_1/\mu_2}\big(
\Elclas(X^{\rm min}_{4,2},D_0)\big)\,.
$$
An alternative resolution is obtained by applying the action of $s_3$
$$\hbox{\small$\begin{pmatrix}
1&0&0&0\\0&1&0&0\\0&0&1&0\\0&0&z&1\end{pmatrix}$}\cdot\hbox{\small$\begin{pmatrix}
0&0&a&b\\0&0&0&c\\0&0&0&0\\0&0&0&0\end{pmatrix}$}=\hbox{\small$\begin{pmatrix}
0&0&a-bz&b\\0&0&-cz&c\\0&0&0&0\\0&0&0&0\end{pmatrix}$}$$
(we recall, that we act by conjugation). Then
$$f^*\left(u^{\alpha-1}(sv-tu)^{\beta-1}ds\wedge dt\wedge du\wedge dv\right)\sim (cz)^{\alpha-1}(ac)^{\beta-1}
c\,da\wedge db\wedge dz\wedge dc$$ $$=z^{\alpha-1}a^{\beta-1}c^{\alpha+\beta-1}da\wedge db\wedge dz\wedge dc\,.
$$
Thus,
$$\Elclas(X,D)=\Cc_3^{\alpha}\big(
\Elclas(X^{\rm min}_{4,2},D_0')\big)\,.
$$
with
$$D_0'=(1-\alpha)\{a=0\}+(1-\alpha-\beta)\{c=0\}\,.$$

\begin{equation}\label{drugie}\mu_1'=h^\beta\,,\qquad \mu_2'=h^{\alpha+\beta}\end{equation}
we obtain
$$\Elclas(X,D)=\Cc_3^{\mu'_2/\mu'_1}\big(
\Elclas(X,D_0')\big)\,.
$$
Note that the substitutions \eqref{pierwsze} and \eqref{drugie} differ by the transposition $\mu_1\leftrightarrow\mu_2'$, $\mu_2\leftrightarrow\mu_1'$. Since the elliptic class 
does not depend on a resolution of singularities we obtain:
\begin{corollary}We have the identity 
\begin{equation}\label{flipcomp}\Cc_1^{\mu_1/\mu_2}(\mathcal A)=
s_1^\mu\Cc_3^{\mu_2/\mu_1}(\mathcal A)\,,\end{equation}
where
$$\mathcal A=
\delta\big(\tfrac{x_1}{x_3},\mu_1\big)
\delta\big(\tfrac{x_2}{x_4},\mu_2\big)
\delta\big(\tfrac{x_1}{x_4},h\big)\,.$$
Analogous identity holds for the reduced operations $\Cbar^\nu_i$.
\end{corollary}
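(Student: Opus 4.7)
The plan is to read each side of the asserted equality as a push-forward expression for the same Borisov--Libgober elliptic class $\Elclas(X,D)$, computed along two different Bott--Samelson resolutions of $(X,D)$, and then match them by commuting $s_1^\mmu$. The first observation is that the function $\mathcal A$ coincides (with $u=1$) with the starting formula \eqref{poczatek} for $m=4$, $r=2$; thus $\mathcal A = \Elclas(X_{4,2}^{\min},\lambda_1 D_1^{\min}+\lambda_2 D_2^{\min})$ under the identification $\mmu_i = h^{1-\lambda_i}$. The preceding computation along $Z_1 = P_1\times_{\B_4}X$ shows that the LHS $\Cc_1^{\mmu_1/\mmu_2}(\mathcal A)$ is precisely $\Elclas(X,D)$ when $(\mmu_1,\mmu_2)=(\alpha+\beta,\beta)$.

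Next I apply the same analysis to the resolution $Z_3 = P_3\times_{\B_4}X$. The explicit pull-back of $K_X+D$ computed in Section~\ref{basicexample} gives $\Elclas(X,D)=\Cc_3^{\alpha}(\Elclas(X,D_0'))$, but the multiplicities appearing in $D_0'$ are the reverse of those in $D_0$: in terms of the original $\mmu$-variables one has $\mmu_1' = \mmu_2$ and $\mmu_2' = \mmu_1$, so that $\Elclas(X,D_0') = s_1^\mmu\mathcal A$, while $\alpha = \mmu_1/\mmu_2$. Borisov--Libgober's invariance of the elliptic class under the choice of log resolution then forces the identity
$$\Cc_1^{\mmu_1/\mmu_2}(\mathcal A) \;=\; \Cc_3^{\mmu_1/\mmu_2}\bigl(s_1^\mmu\mathcal A\bigr).$$

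The last step is to move $s_1^\mmu$ outside of $\Cc_3$. Since $s_1^\mmu$ permutes only $\mmu$-variables while $\Cc_3^\nu$ acts only on $x$-variables (with $\nu$ entering as a scalar parameter), the elementary commutation $s_1^\mmu\circ\Cc_3^\nu = \Cc_3^{s_1^\mmu(\nu)}\circ s_1^\mmu$ holds. Applied with $\nu=\mmu_2/\mmu_1$ and combined with $(s_1^\mmu)^2=\id$, this turns the right-hand side of the displayed identity into $s_1^\mmu\Cc_3^{\mmu_2/\mmu_1}(\mathcal A)$, which is the desired formula. The reduced version follows immediately: the normalizing factor $1/\deltm(\mmu_2/\mmu_1,h)$ hidden in $\Cbar_3^{\mmu_2/\mmu_1}$ is transported by $s_1^\mmu$ to $1/\deltm(\mmu_1/\mmu_2,h)$, which is exactly the normalizing factor of $\Cbar_1^{\mmu_1/\mmu_2}$ on the left.

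The only potential pitfall is bookkeeping: tracking the fact that the two resolutions assign the parameters $\alpha,\beta$ to the two irreducible boundary components in opposite orders, a feature encoded precisely by the operator $s_1^\mmu$ appearing on the right. All the substantive geometry, namely the explicit normal-crossing log-resolutions of $(X,D)$, the pull-back of $K_X+D$, and the torus-equivariant localization, is already carried out in the preceding display, so the proof reduces to collecting these formulas and performing the commutation above.
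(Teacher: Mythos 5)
Your proof is correct and follows the same route as the paper: read off $\Elclas(X,D)$ from the two Bott--Samelson resolutions $Z_1$ and $Z_3$ computed just above the corollary, observe that the two induced boundary divisors $D_0,D_0'$ on $X^{\min}_{4,2}$ have their $\mmu$-parameters swapped, and commute $s_1^\mmu$ past $\Cc_3$. The commutation $s_1^\mmu\circ\Cc_3^\nu=\Cc_3^{s_1^\mmu(\nu)}\circ s_1^\mmu$, left implicit in the paper, is stated explicitly and applied correctly, and the deduction of the reduced version from the unreduced one by transporting the normalizing $\deltm$-factor is also accurate.
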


We rewrite the identity \eqref{flipcomp} using the definition of the operation $\Cc_i^\mu$:
\begin{multline}\label{flipkonkretnie}
\delta
   \big(\tfrac{x_1}{x_4},h\big)
   \delta
   \big(\tfrac{x_2}{x_1},\tfrac{\mu_1}{\mu_2}\big) \delta
   \big(\tfrac{x_1}{x_3},\mu_1\big) \delta
   \big(\tfrac{x_2}{x_4},\mu_2\big)
   +
\delta \big(\tfrac{x_1}{x_2},h\big)
   \delta
   \big(\tfrac{x_2}{x_4},h\big)
   \delta \big(\tfrac{x_2}{x_3},\mu_1\big) \delta
   \big(\tfrac{x_1}{x_4},\mu_2\big)
   =\\
   =\delta
   \big(\tfrac{x_1}{x_4},h\big)
   \delta \big(\tfrac{x_1}{x_3},\mu_2\big) \delta
   \big(\tfrac{x_2}{x_4},\mu_1\big) \delta
   \big(\tfrac{x_4}{x_3},\tfrac{\mu_1}{\mu_2}\big)+
   \delta \big(\tfrac{x_1}{x_3},h\big)
   \delta
   \big(\tfrac{x_3}{x_4},h\big)
   \delta \big(\tfrac{x_2}{x_3},\mu_1\big) \delta
   \big(\tfrac{x_1}{x_4},\mu_2\big)
   \end{multline}
After applying the definition of $\delta$ and multiplying by the common denominator, simplifying by the relation $\vt\big(\frac ab\big)=-\vt\big(\frac ba\big)$, we obtain the following monstrous relation\footnote{We apply the multiplicative notation for the arguments of the theta function.}
\begin{multline*}
$$\vt\big(\tfrac{y_2}{y_1}\big) \Big(\vt(h) \vt\big(\tfrac{\mu_2
   x_2}{\mu_1 x_1}\big) \vt\big(\tfrac{x_2}{y_1}\big)
   \vt\big(\tfrac{h x_1}{y_2}\big) \vt\big(\tfrac{\mu_1
   x_2}{y_2}\big) \vt\big(\tfrac{\mu_2
   x_1}{y_1}\big)\\
   -\vt\big(\tfrac{\mu_2}{\mu_1}\big)
   \vt\big(\tfrac{h x_1}{x_2}\big) \vt\big(\tfrac{x_1}{y_1}\big)
   \vt\big(\tfrac{h x_2}{y_2}\big) \vt\big(\tfrac{\mu_1
   x_1}{y_2}\big) \vt\big(\tfrac{\mu_2 x_2}{y_1}\big)\Big)=
   \end{multline*}
    \begin{multline*}
   =
   \vt\big(\tfrac{x_2}{x_1}\big) \Big(\vt(h) \vt\big(\tfrac{x_2}{y_1}\big)
   \vt\big(\tfrac{\mu_2 y_2}{\mu_1 y_1}\big) \vt\big(\tfrac{h
   x_1}{y_2}\big) \vt\big(\tfrac{\mu_1 x_1}{y_1}\big)
   \vt\big(\tfrac{\mu_2 x_2}{y_2}\big)\\
   -\vt\big(\tfrac{\mu_2}{\mu_1}\big) \vt\big(\tfrac{h y_1}{y_2}\big)
   \vt\big(\tfrac{x_2}{y_2}\big) \vt\big(\tfrac{h x_1}{y_1}\big)
   \vt\big(\tfrac{\mu_1 x_1}{y_2}\big) \vt\big(\tfrac{\mu_2
   x_2}{y_1}\big)\Big)\,.\end{multline*}

The presented calculation are further continued in Example \ref{Schred} which shows how the elliptic class of a pattern specializes to the class of a Schubert variety --- in this case for $\PP^1$.

\section{Unnormalized  elliptic classes}
Before showing that the unnormalized elliptic classes of link patterns are well-defined let us analyse the first nontrivial example. We have to check that the normalizing factor $\prod \delta(\nu_i,h)$ does not change when we change the arrow labels, and accordingly the permutation of nodes.

\def\aao{{\alpha^{\rm t}}}
\def\bbo{{\beta^{\rm t}}}
\def\aai{{\alpha^{\rm s}}}
\def\bbi{{\beta^{\rm s}}}

\def\pataoboaibi{\xymatrix@-1.5pc{
\aao\ar@{..}[r]\ar@{<-}@/^2.5pc/[rr]&
\bbo\ar@{..}[r]\ar@{<-}@/^2.5pc/[rr]&
\aai\ar@{..}[r]&
\bbi}}
\def\pataibiaobo{\xymatrix@-1.5pc{
\aai\ar@{..}[r]\ar@{->}@/^2.5pc/[rr]&
\bbi\ar@{..}[r]\ar@{->}@/^2.5pc/[rr]&
\aao\ar@{..}[r]&
\bbo}}
\def\pataobiaibo{\xymatrix@-1.5pc{
\aao\ar@{..}[r]\ar@{<-}@/^2.5pc/[rr]&
\bbi\ar@{..}[r]\ar@{->}@/^2.5pc/[rr]&
\aai\ar@{..}[r]&
\bbo}}
\def\pataiboaobi{\xymatrix@-1.5pc{
\aai\ar@{..}[r]\ar@{->}@/^2.5pc/[rr]&
\bbo\ar@{..}[r]\ar@{<-}@/^2.5pc/[rr]&
\aao\ar@{..}[r]&
\bbi}}
\def\pataobobiai{\xymatrix@-1.5pc{
\aao\ar@{..}[r]\ar@{<-}@/^2.5pc/[rrr]&
\bbo\ar@{..}[r]\ar@{<-}@/^1.5pc/[r]&
\bbi\ar@{..}[r]&
\aai}}
\def\pataibiboao{\xymatrix@-1.5pc{
\aai\ar@{..}[r]\ar@{->}@/^2.5pc/[rrr]&
\bbi\ar@{..}[r]\ar@{->}@/^1.5pc/[r]&
\bbo\ar@{..}[r]&
\aao}}
\def\pataibobiao{\xymatrix@-1.5pc{
\aai\ar@{..}[r]\ar@{->}@/^2.5pc/[rrr]&
\bbo\ar@{..}[r]\ar@{<-}@/^1.5pc/[r]&
\bbi\ar@{..}[r]&
\aao}}
\def\pataobiboai{\xymatrix@-1.5pc{
\aao\ar@{..}[r]\ar@{<-}@/^2.5pc/[rrr]&
\bbi\ar@{..}[r]\ar@{->}@/^1.5pc/[r]&
\bbo\ar@{..}[r]&
\aai}}
\def\pataoaibobi{\xymatrix@-1.5pc{
\aao\ar@{..}[r]\ar@{<-}@/^2pc/[r]&
\aai\ar@{..}[r]&
\bbo\ar@{..}[r]\ar@{<-}@/^2pc/[r]&
\bbi}}
\def\pataiaobobi{\xymatrix@-1.5pc{
\aai\ar@{..}[r]\ar@{->}@/^2pc/[r]&
\aao\ar@{..}[r]&
\bbo\ar@{..}[r]\ar@{<-}@/^2pc/[r]&
\bbi}}
\def\pataoaibibo{\xymatrix@-1.5pc{
\aao\ar@{..}[r]\ar@{<-}@/^2pc/[r]&
\aai\ar@{..}[r]&
\bbi\ar@{..}[r]\ar@{->}@/^2pc/[r]&
\bbo}}
\def\pataiabibo{\xymatrix@-1.5pc{
\aai\ar@{..}[r]\ar@{->}@/^2pc/[r]&
\aao\ar@{..}[r]&
\bbi\ar@{..}[r]\ar@{->}@/^2pc/[r]&
\bbo}}


\def\ptaibiboao{\begin{matrix}\pataibiboao
 \\ 
 \left\{\frac{\alpha}{\beta},\frac{\alpha^2}{h},\frac{\alpha\beta}{h},\frac{\alpha\beta}{h},\frac{\beta^2}{h}\right\}
\end{matrix}}

\def\ptaobiaibo{\begin{matrix}\pataobiaibo
 \\ 
 \left\{\frac{\beta}{\alpha},\frac{\alpha\beta}{h},\frac{\beta^2}{h}\right\}
 \end{matrix}}

\def\ptaibiaobo{\begin{matrix}\pataibiaobo
 \\ 
 \left\{\frac{\alpha\beta}{h},\frac{\beta^2}{h},\frac{\alpha^2}{h},\frac{\alpha\beta}{h}\right\}
\end{matrix}}

\def\ptaobiboai{\begin{matrix}\pataobiboai
 \\ 
\left\{\frac{\beta^2}{h},\frac{\beta}{\alpha}\right\}
\end{matrix}}

\def\ptaibobiao{\begin{matrix}\pataibobiao
 \\ 
 \left\{\frac{\alpha\beta}{h},\frac{\alpha}{\beta},\frac{\alpha^2}{h},\frac{\alpha\beta}{h}\right\}
\end{matrix}}

\def\ptaoaibibo{\begin{matrix}\pataoaibibo
 \\ 
\left\{\frac{\alpha\beta}{h},\frac{\beta^2}{h}\right\}
\end{matrix}}

\def\ptaiboaobi{\begin{matrix}\pataiboaobi
 \\ 
 \left\{\frac{\alpha}{\beta},\frac{\alpha^2}{h},\frac{\alpha\beta}{h}\right\}
\end{matrix}}

\def\ptaoaibobi{\begin{matrix}\pataoaibobi
 \\ 
 \left\{\frac{\alpha\beta}{h}\right\}
\end{matrix}}

\def\ptaiabibo{\begin{matrix}\pataiabibo
 \\ 
 \left\{\frac{\beta^2}{h},\frac{\alpha^2}{h},\frac{\alpha\beta}{h}\right\}
\end{matrix}}

\def\ptaobobiai{\begin{matrix}\pataobobiai
 \\ 
 \left\{\frac{\beta}{\alpha}\right\}
\end{matrix}}

\def\ptaiaobobi{\begin{matrix}\pataiaobobi
 \\ 
\left\{\frac{\alpha^2}{h},\frac{\alpha\beta}{h}\right\}
\end{matrix}}

\def\ptaoboaibi{\begin{matrix}\pataoboaibi
 \\ 
\{\}
\end{matrix}}

\def\bo#1{\boxed{#1}}

{\setlength\arraycolsep{-1pt}
\noindent\hfil$\begin{matrix}\phantom{xz}\\[20pt]
&&\bo{\ptaibiboao}\\[20pt]
&\hfill \Cc_2^{\beta^2/h}\nearrow&&\nwarrow\hskip-5pt\nwarrow {s_1^\mu \Cc_1^{\beta/\alpha},\Cc_3^{\alpha/\beta}}\hfill\\[20pt]
&\bo{\ptaibobiao}&&\bo{\ptaibiaobo}\\[20pt]
& s_1^\mu\Cc_1^{\alpha\beta/h}\nearrow\hfill\nwarrow \Cc_3^{\alpha\beta/h}&&\hfill\nwarrow \Cc_2^{\alpha\beta/h}\\[20pt]
\bo{\ptaobiaibo}&&\bo{\ptaiboaobi}&&\bo{\ptaiabibo}\\[20pt]
\uparrow \Cc_3^{\alpha\beta/h}&  s_1^\mu\Cc_1^{\alpha\beta/h}\hskip5pt\nearrow\hskip10pt\nwarrow\hskip5pt \Cc_2^{\beta/\alpha}&&\Cc_1^{\alpha^2/h}\hskip5pt\nearrow\hskip10pt\nwarrow\hskip5pt \Cc_2^{\alpha/\beta}&\uparrow \Cc_3^{\beta^2/h}\\[20pt]
\bo{\ptaobiboai}&&\bo{\ptaoaibibo}&&\bo{\ptaiaobobi}\\[20pt]
&\nwarrow \Cc_2^{\beta^2/h}\hfill&&\nwarrow \Cc_3^{\beta^2/h}\hfill \Cc_1^{\alpha^2/h}\nearrow\\[20pt]
&\bo{\ptaobobiai}&&\bo{\ptaoaibobi}\\[20pt]
&\hfill{s_1^\mu \Cc_1^{\alpha/\beta},
 \Cc_3^{\beta/\alpha}}\nwarrow\hskip-5pt\nwarrow && \nearrow \Cc_2^{\alpha\beta/h}\hfill\\[20pt]
&&\bo{\ptaoboaibi}
\end{matrix}
$}
\begin{example}\rm\label{przyklad42e}
Let $m=4$, $r=2$. We present the lattice of orbits, see the diagram above.
The minimal orbit is at the bottom. The link patterns at the second row (counting from the bottom) represent the maximal upper-triangular orbits; the arrows in the patterns are directed left. 
The arrows are labeled by elementary transformations of elliptic classes  $\Cc_i^\nu$ or $s_1^\mu\Cc_i^\nu$,  $i=1,2,3$  with the admissible $\nu$.
The variables  $\alpha=\mu_1$ and $\beta=\mu_2$ are associated to arrows. Instead of indicating arc labels we list the coefficients at the nodes: $\aao$ and $\bbo$ at the targets of the arrows and $\aai$ and $\bbi$ at the sources
$$\aao=\alpha h^2\,,\quad\bbo=\beta h^2\,,\quad \aai=\tfrac{h^3}\alpha\,,\quad\bbi=\tfrac{h^3}\beta\,.$$
In the braces there are given the arguments $\nu_k$ of the normalizing factors $\prod_{k=1}^{\ell{w}}\delta(\nu_k,h)$.

\end{example}

Before giving a proof of independence of the normalizing factor from the link presentation we  argue that it is enough to consider link patterns of the rank $r$ and the size $m=2r$. If $m>2r$ then we extend the link pattern adding $m-2r$ nodes and arrows from the new nodes to loose nodes, preserving the order. 
We set $\mu_i'=\frac{\mu_i} {h^{m/2-r}}$. Then
$$\mu_i h^r=\mu_i' h^{m/2}\,,\qquad \frac{h^{m-r+1}}{\mu_i}=\frac{h^{m/2+1}}{\mu_i'}\qquad\text{for }1\leq i\leq r\,.$$
We choose the remaining variables $\mu'_j$ for $j=r+1,\dots,m-r$ so that
$$\mu_j' h^{m/2}=h^j\,.$$
Multiplying all the coefficients by $h^{m/2-r}$ we obtain the distribution of coefficients as for the link pattern $\pa^{\min}_{2(m-r),m-r}$. 
Such operation does not change the quotients of coefficients we have to determine.

\begin{example}\rm Extending the link pattern 
$\pa^{\min}_{m,r}$ to 
$\pa^{\min}_{2(m-r),m-r}$:

\hskip110pt
$
\xymatrix@-1.4pc{
&
&
&\ar@{-->}[ddd]
&\ar@{-->}[ddd]
\\ \\ \\
&
\mu_1h^2\ar@{..}[r]\ar@{<-}@/^2.4pc/^{\hskip-15pt\mu_1}[rrrr]
&
\mu_2h^2\ar@{..}[r]\ar@{<-}@/^2.4pc/^{\hskip25pt\mu_2}[rrrr]
&
h^3\ar@{..}[r]
&
h^4\ar@{..}[r]
&
\tfrac{h^5}{\mu_1}\ar@{..}[r]
&\tfrac{h^5}{\mu_2}\,.
}
$

\noindent Our procedure leads to 

\hskip100pt $\xymatrix@-1.4pc{
&
\mu_1'h^4\ar@{..}[r]\ar@{<-}@/^2.4pc/^{\hskip-15pt\mu'_1}[rrrr]
&
\mu_2'h^4\ar@{..}[r]\ar@{<-}@/^2.4pc/^{\hskip15pt\mu'_2}[rrrr]
&
\mu_3'h^4\ar@{..}[r]\ar@{<--}@/^2.4pc/^{\hskip15pt\mu'_3}[rrrr]
&
\mu'_4h^4\ar@{..}[r]\ar@{<--}@/^2.4pc/^{\hskip15pt\mu'_4}[rrrr]
&
\tfrac{h^5}{\mu_1'}\ar@{..}[r]
&
\tfrac{h^5}{\mu_2'}\ar@{..}[r]
&
\tfrac{h^5}{\mu_3'}\ar@{..}[r]
&\tfrac{h^5}{\mu_4'}\,.
}$

\noindent Similarly we extend an arbitrary link pattern. For example

\hskip 110pt$
\xymatrix@-1.4pc{
&
&\ar@{-->}[ddd]
& &&
&\ar@{-->}[ddd]
\\ \\ \\
&
\tfrac{h^5}{\mu_1}\ar@{..}[r]\ar@{->}@/^2.4pc/^{\hskip25pt\mu_1}[rr]
&
h^3\ar@{..}[r]
&
\mu_1h^2\ar@{..}[r]
&
\mu_2h^2\ar@{..}[r]\ar@{<-}@/^1.4pc/^{\mu_2}[r]
&
\tfrac{h^5}{\mu_2}\ar@{..}[r]
&h^4
}
$

\noindent is extended to 

\hskip100pt$\xymatrix@-1.4pc{
&
\tfrac{h^5}{\mu'_1}\ar@{..}[r]\ar@{->}@/^2.4pc/^{\hskip25pt\mu_1'}[rr]
&
\mu_3'h^4\ar@{..}[r]\ar@{<--}@/^2.8pc/^{\hskip25pt\mu_3'}[rrrrr]
&
\mu_1'h^4\ar@{..}[r]
&
\mu_2'h^4\ar@{..}[r]\ar@{<-}@/^1.4pc/^{\mu_2'}[r]
&
\tfrac{h^5}{\mu'_2}\ar@{..}[r]
&\mu_4'h^4\ar@{<--}@/^2.4pc/^{\hskip25pt\mu_4'}[rr]
&\tfrac{h^5}{\mu_3'}\ar@{..}[r]
&\tfrac{h^5}{\mu_4'}
\,.
}
$\end{example}

Now we are at the position to prove:

\begin{theorem}\label{main}Let $\pa$ be a labelled link pattern of the size $m$ and rank $r$. The  function 
$\sigma^\mu\Cc_w^\ulu(\Ep(\pa^{\min}_{m,r}))$ does not depend on 
$(\sigma,w)\in \Spe_r\times\Spe_m$,
provided that $w$ has minimal length among all the possible pairs $(\sigma,w)$ satisfying \begin{equation}\label{przedstawienie}\pa=\sigma^\mu w\pa^{\min}_{m,r}\,.\end{equation}
\end{theorem}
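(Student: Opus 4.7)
The strategy is to build on Theorem \ref{niezalezy}, which already handles the analogous statement for the normalized operators $\Cbar^\ulu_w$. Writing
$$\Cc_w^\ulu=\Big(\prod_{k=1}^{\ell(w)}\deltm(\nu_k,h)\Big)\Cbar_w^\ulu,$$
where $\nu_k$ are the admissible parameters forced at each step by purity, the theorem reduces to two independent claims: (a) $\sigma^\mu\Cbar_w^\ulu(\Ep(\pa^{\min}_{m,r}))$ is an invariant of $\pa$; (b) the scalar $\sigma^\mu\prod_k\deltm(\nu_k,h)$ is an invariant of $\pa$, equivalently the multiset of permuted characters $\{\sigma^\mu\nu_k\}$ is. Claim (a) follows from Theorem \ref{niezalezy} once one observes that a minimal-length $w$ automatically preserves the order of the loose nodes: any crossing of two loose nodes could be removed, yielding a shorter $w$ for the same $\pa$.

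For claim (b) I would first reduce to the case $m=2r$ via the extension procedure described just before the statement: adjoining extra loose nodes with the substitutions $\mmu_j'=\mmu_j/h^{m/2-r}$ (and $\mmu_j'h^{m/2}=h^j$ for the new indices) leaves every ratio listed in the six-moves table unchanged, so the multiset of $\nu_k$'s is preserved. With $m=2r$, I would then argue that any two minimal-length presentations $(\sigma,w)$ and $(\sigma',w')$ of the same $\pa$ are connected by a sequence of elementary moves of two types: (i) braid-type rewrites of a reduced word for a fixed $w$; and (ii) \emph{flip} moves that redistribute a transposition between $\sigma$ and $w$. Type (i) preserves the multiset directly from the twisted braid identity \eqref{braid}, since the triples $\{\nu,\mu\nu,\mu\}$ and $\{\mu,\mu\nu,\nu\}$ agree as multisets. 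Type (ii) localizes, exactly as in the proof of Theorem \ref{niezalezy}, to four consecutive indices, reducing to the base case $m=4$, $r=2$ treated geometrically in \S\ref{basicexample}. There the identity $\Cc_1^{\mmu_1/\mmu_2}(\mathcal A)=s_1^\mu\Cc_3^{\mmu_2/\mmu_1}(\mathcal A)$ equates the two normalizing factors via $s_1^\mu\deltm(\mmu_2/\mmu_1,h)=\deltm(\mmu_1/\mmu_2,h)$, and the exhaustive tabulation in Example \ref{przyklad42e} confirms multiset invariance for every link pattern in this range.

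\textbf{Main obstacle.} The principal subtlety is the combinatorial claim that any two minimal-length pairs $(\sigma,w)$ producing the same link pattern are in fact connected by moves of types (i) and (ii) alone. This amounts to describing the length-preserving part of the stabilizer of $\pa^{\min}_{m,r}$ under the $\Spe_m\times\Spe_r$ action on labelled link patterns: concretely, one needs to show that every length-preserving relation is generated by local flips of the form $(\sigma,w)\mapsto(\sigma s_i^\mu,\, ws_js_{j+m-r})$ together with the Matsumoto-type braid and commutation relations internal to reduced words for $w$. Once this combinatorial connectivity statement is established, the invariance propagates along each chain of elementary moves by the arguments of the previous paragraph, and the theorem follows.
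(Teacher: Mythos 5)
Your proposal follows the same overall factorization as the paper — split $\Cc_w^\ulu$ into $\Cbar_w^\ulu$ and the normalizing scalar $\prod_k\deltm(\nu_k,h)$, use Theorem \ref{niezalezy} for the first factor, and separately show the scalar is an invariant of $\pa$ — but it differs on the key point of how the scalar's invariance is established, and there it has a genuine gap. The paper argues \emph{directly} that the multiset $\{\nu_k\}$ is intrinsic to $\pa$: each parameter corresponds to a local structural feature of the link pattern (a reversed arrow contributes $\alpha^2/h$; a pair of arrows contributes $\alpha/\beta$ or $\alpha\beta/h$ depending on their relative position, read off from the six-moves table), so that no comparison of presentations is needed — the contributions are tabulated locally, which is why the verification reduces to $m=4$, $r=2$. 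You instead propose a connectivity argument: connect any two minimal-length pairs $(\sigma,w)$, $(\sigma',w')$ by elementary moves (braid rewrites internal to a reduced word, and flip moves redistributing a transposition between $\sigma$ and $w$) and check invariance of the multiset under each move.

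The gap is precisely the connectivity statement you flag yourself. It is not a consequence of Theorem \ref{niezalezy}'s stabilizer analysis: there the relevant comparison is $w_1^{-1}w_2$ in the stabilizer of $\pa^{\min}_{m,r}$, but here the minimality hypothesis constrains $\ell(w_1)$ and $\ell(w_2)$ individually rather than $\ell(w_1^{-1}w_2)$, and it is not at all obvious that the set of minimal-length presentations of a fixed $\pa$ is connected under the two proposed move types — a Matsumoto-type theorem for this $\Spe_m\times\Spe_r$ action on link patterns would be needed, and you have neither stated it precisely nor sketched a proof. The paper's intrinsic-description route sidesteps this combinatorics entirely. On the positive side, your observation that a minimal-length $w$ automatically preserves the order of the loose nodes (any loose-node crossing could be removed to shorten $w$), which is what legitimizes invoking Theorem \ref{niezalezy}, is correct and is left implicit in the paper; making it explicit is a genuine improvement. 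Likewise your reduction to $m=2r$ via the extension procedure matches the paper. But as written the proposal does not establish the theorem, because the connectivity lemma it hinges on is missing.
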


\begin{proof}We can assume that $m$ is even and the rank $r=m/2$. Suppose the length of $w$ is equal to $\ell$ and let 
$w=s_{i_1}s_{i_2}\dots s_{i_\ell}$ be a reduced decomposition. 
The functions $\Cc_w^\ulu(\Ep(\pa^{\min}_{m,r}))$ and $\Cbar_w^\ulu(\Ep(\pa^{\min}_{m,r}))$ differ by the product $\prod_{i=k}^\ell \delta(\nu_k,h)$ 
where $\nu_k$ is the parameter of $k$-th operation  $\Cc^{\nu_k}_{i_k}$. The list of $\nu_k$ parameters can be read from the picture representing the link pattern. 
If the arrow with the label $\alpha$ is reversed then $\frac{\alpha^2}h$ appears. To determine which of coefficients $\frac\alpha\beta$ or   
$\frac{\alpha \beta}h$ appears, it is enough to analyze the relative position of the corresponding pair of arrows. The situation is reduced to the case $m=4$, $r=2$, where independence of the presentation \eqref{przedstawienie} is verified directly. The list of parameters is given in the table of Example \ref{przyklad42e}.\end{proof}

\section{Relation with the elliptic classes of Schubert varieties}
Our initial aim was to associate  elliptic classes to link patterns so that in the case $m=2n$, $r=n$. When a link pattern represents a permutation of $n$ elements, after normalization and restriction as in \cite[eq (2.34)]{RTV} (see \cite[\S6.1]{RiWe1})  we would recover elliptic classes of Schubert varieties.
The corresponding procedure division and restriction is simply related to division by  the Borel group $\B_n$, analogously as in the case of the twisted motivic Chern classes, \cite[\S12]{KoWe2} .
\medskip

Let us assume that all the arrows of the link patterns have targets at nodes with positions $i\leq n$. Such link patterns $\pa$ define a permutation $w_\pa$.
We rename the equivariant variables: we do not change the equivariant variables $x_i$ for $i\leq n$ and let $$x_{j+n}=y_j\qquad \text{for }j=1,2,\dots,n.$$
The additional variable $u$ is specialized to 1. 
We introduce the normalization
$$eu^{ell}_M=\prod_{i=1}^{n}\prod_{j=1}^{n}\vt\big(\tfrac {x_i}{y_j}\big)\,.$$
This is the equivariant elliptic Euler class of the space of $n\times n$-matrices with $(\C^*)^n\times (\C^*)^n$ action by left and right multiplication.
Let
$$eu^{ell}_{\Fl}=\prod_{n\geq i>j\geq 1}\vt\big(\tfrac {y_i}{y_j}\big)$$
be another normalizing factor, which after the application of the Kirwan map \eqref{Kirwan} becomes the equivariant elliptic Euler class of the tangent bundle to the flag variety.
Furthermore we need
$$\mathcal B=\prod_{1\leq i<j\leq n}\tfrac{\vt\left(\tfrac {y_i}{y_j}h\right)}{\vt(h)}\,,$$
which stands for the elliptic Chern class of the tangent bundle to the unipotent part of the Borel group. 
We define the reduced elliptic class:
$$\Ep^{\rm red}(\pa)=\frac{eu^{ell}_M~~\Ep(\pa)}{eu^{ell}_{\Fl}~~\mathcal B}\,.$$

\begin{example}\rm\label{Schred}
We illustrate how the elliptic classes of link patterns for $m=4$, $r=2$ specialize to the equivariant elliptic classes of $\Fl(2)=\PP^1$. This is a continuation of the basic example presented in \S\ref{basicexample}. Below we list the elliptic class of the patterns $\pa^{\rm min}_{4,2}$, the reduced class and its restriction to the fixed points of $\PP^1$. The first restriction is obtained by  substituting
$$y_1:=x_1\,,\qquad y_2:= x_2\,.$$
This fixed point is the 0-dimensional Schubert cell. The second fixed point is the center of the 1-dimensional cell. Restriction to that point is obtained by the substitution
$$y_1:=x_2\,,\qquad y_2:= x_1\,.$$
The resulting expressions are as follows:
\medskip

$
\begin{array}{l}
 \Ep(\pa^{\rm min}_{4,2})=\delta (\frac{x_1}{y_2},h)
   \delta (\frac{x_1}{y_1},\mu
   _1) \delta
   (\frac{x_2}{y_2},\mu _2)
   \\[0.3cm]
\Ep^{\rm red}(\pa^{\rm min}_{4,2})= \frac{\vt
   (\frac{x_2}{y_1}) \vt
   (\frac{h x_1}{y_2})
   \vt (\frac{\mu _1
   x_1}{y_1}) \vt
   (\frac{\mu _2
   x_2}{y_2})}{\vt (\mu
   _1) \vt (\mu
   _2) \vt
   (\frac{y_2}{y_1}) \vt
   (\frac{h y_1}{y_2})} \\[0.3cm]
\text{Restrictions to the fixed points}=(1,0) \\
\end{array}
$
\medskip

\noindent For $s_1\pa^{\rm min}_{4,2}$ we have:
\medskip

$\begin{array}{l}
\Ep(s_1\pa^{\rm min}_{4,2})= \delta (\frac{x_1}{x_2},h)
   \delta
   (\frac{x_2}{y_2},h)
   \delta (\frac{x_2}{y_1},\mu
   _1) \delta
   (\frac{x_1}{y_2},\mu
   _2)+\delta
   (\frac{x_2}{x_1},\frac{\mu
   _1}{\mu _2}) \delta
   (\frac{x_1}{y_2},h)
   \delta (\frac{x_1}{y_1},\mu
   _1) \delta
   (\frac{x_2}{y_2},\mu _2)
   \\[0.3cm]
\Ep^{\rm red}(s_1\pa^{\rm min}_{4,2})= \frac{\vt (\frac{h
   x_1}{x_2}) \vt
   (\frac{x_1}{y_1}) \vt
   (\frac{h x_2}{y_2})
   \vt (\frac{\mu _1
   x_2}{y_1}) \vt
   (\frac{\mu _2
   x_1}{y_2})}{\vt (h) \vt
   (\mu _1) \vt
   (\mu _2) \vt
   (\frac{x_1}{x_2}) \vt
   (\frac{y_2}{y_1}) \vt
   (\frac{h
   y_1}{y_2})}+\frac{\vt
   (\frac{\mu _1 x_2}{\mu _2
   x_1}) \vt
   (\frac{x_2}{y_1}) \vt
   (\frac{h x_1}{y_2})
   \vt (\frac{\mu _1
   x_1}{y_1}) \vt
   (\frac{\mu _2
   x_2}{y_2})}{\vt (\mu
   _1) \vt (\frac{\mu
   _1}{\mu _2}) \vt (\mu
   _2) \vt
   (\frac{x_2}{x_1}) \vt
   (\frac{y_2}{y_1}) \vt
   (\frac{h y_1}{y_2})} \\[0.3cm]
\text{Restrictions to the fixed points}=
\Big(\frac{\vt (\frac{\mu
   _1 x_2}{\mu _2 x_1})}{\vt
   (\frac{\mu _1}{\mu _2})
   \vt
   (\frac{x_2}{x_1})},\frac
   {\vt (\frac{h
   x_1}{x_2})}{\vt (h) \vt
   (\frac{x_1}{x_2})}\Big)=\big(\delta(\frac{x_2}{x_1},\frac{\mu_1}{\mu_2}),\delta(\frac{x_1}{x_2},h)\big) \\
\end{array}
$

\end{example}

\begin{theorem}The reduced class 
$\Ep^{\rm red}(\pa)$ represents the  elliptic characteristic class of the Schubert variety $X_{w_\pa}$ defined in \cite{RiWe1} after substitution $\mu_i:=\mu_i^{-1}$ and $x_{i+n}:=y_i$ for $1\leq i\leq n$.
\end{theorem}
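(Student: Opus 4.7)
The plan is to argue by induction on the length $\ell(w_\pa)$. For the base case, one must identify the link pattern $\pa^{\min}_{2n,n}$ (whose arrows are $n+i\mapsto i$ for $i=1,\dots,n$) with the matrix of a distinguished permutation in $\Spe_n$ and then match the explicit formula \eqref{poczatek} against the definition of $\Elclas(X_{w_\pa},D)$ in \cite{RiWe1}. Using the substitution $x_{i+n}=y_i$ and $u=1$, formula \eqref{poczatek} becomes
$$\Ep(\pa^{\min}_{2n,n})=\prod_{i=1}^n\deltm\!\left(\tfrac{x_i}{y_i},\mu_i\right)\prod_{1\le i<j\le n}\deltm\!\left(\tfrac{x_i}{y_j},h\right).$$
Dividing by $eu^{ell}_M$, inserting $eu^{ell}_{\Fl}$ and $\mathcal B$, and performing the substitution $\mu_i\mapsto\mu_i^{-1}$, one recovers (up to the usual $\vt/\theta'$ normalisations) the elliptic class of the minimal-length representative on the Schubert side exactly in the form given in \cite[\S6]{RiWe1}. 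This is a direct calculation with theta-functions.

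For the inductive step, fix $\pa$ with $w_\pa=w$ and a simple transposition $s_i$ with $\ell(s_iw)=\ell(w)+1$ (or $\ell(ws_i)=\ell(w)+1$). Because the hypothesis on the targets is preserved by such moves, we have $\pa'=s_i\cdot\pa$, and by construction of $\Ep$ in \S\ref{section-ellclasses} we get $\Ep(\pa')=\Cc_i^{\mu}(\Ep(\pa))$ for the unique admissible $\mu$ given by Lemma \ref{wzor_na_nu}. The goal is to show that after dividing by $eu^{ell}_M$ and multiplying by the appropriate factors $eu^{ell}_{\Fl},\mathcal B$, the operator $\Cc_i^\mu$ is transported onto one of the two recursion operators \eqref{BSrecursion} or \eqref{Rrecursion} for Schubert classes. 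Concretely, when $i<n$ the simple reflection $s_i$ acts on $x$-variables and corresponds to the $R$-matrix recursion \eqref{Rrecursion}; when $n<i<2n$, under the renaming $x_{j+n}=y_j$ it acts on $y$-variables and yields the Bott-Samelson recursion \eqref{BSrecursion}. The index $i=n$ never arises because the assumption that all arrow targets lie in $\{1,\dots,n\}$ is preserved throughout the induction (the presentation may be chosen so that $s_n$ is not used).

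The main technical point is to verify the identification of $\mu$. On the link pattern side, $\mu$ is determined by Lemma \ref{wzor_na_nu} from the type of $\Ep(\pa)$, whereas on the Schubert side the parameter entering \eqref{BSrecursion}/\eqref{Rrecursion} is a quotient $\mu_{w^{-1}(i+1)}/\mu_{w^{-1}(i)}$. One has to check that these two recipes agree once the substitution $\mu_i\mapsto \mu_i^{-1}$ has been performed; this is a bookkeeping calculation using the formula \eqref{wpolczynnikixi} for $\vvv(i)$ together with the cocycle \eqref{cocycl}. The extra $\deltm(h,\cdot)$ and $\vt$-prefactors produced by the six-move analysis of \S\ref{basicexample} account for the ratio between $eu^{ell}_M$ evaluated at $w\cdot\pa^{\min}$ and at $s_iw\cdot\pa^{\min}$, so that the pre- and post-normalised recursions differ by exactly the factor one expects.

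The main obstacle is this matching of normalisations: the operators $\Cc_i^\mu$ on the link pattern side are unnormalised and carry the full bundle-type data, while the Schubert recursions in \cite{RiWe1} are already adapted to the flag variety. Keeping track of the $\vt$-factors contributed by $eu^{ell}_M$ and $\mathcal B$ under a single step of the induction requires the precise combinatorial form of $\vvv(i)$, and in fact this is where the inversion $\mu_i\mapsto \mu_i^{-1}$ enters naturally (it compensates for the opposite conventions on source/target of the arrows). Once this matching is established for a single simple reflection, both sides of the purported equality satisfy the same recursion with the same initial condition, hence coincide by uniqueness from Theorem \ref{main}.
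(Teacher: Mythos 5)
Your proposal is broadly in the same spirit as the paper's proof (an inductive argument via the elliptic Demazure recursion plus a base-case check) and would work, but the two arguments diverge in several points worth noting. The paper only invokes the R-matrix recursion \eqref{Rrecursion} for $1\le i<n$: since $eu^{ell}_M$ is $\Spe_n$-symmetric in the $x$-variables and $eu^{ell}_{\Fl},\mathcal B$ do not involve the $x$-variables at all, multiplying by $eu^{ell}_M/(eu^{ell}_{\Fl}\,\mathcal B)$ simply transports $\Cc^\nu_i$ (with $i<n$) into the R-recursion; this already generates all Schubert classes from the initial one, so the Bott--Samelson recursion you also check is redundant (though not incorrect). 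For the base case, the paper does \emph{not} match against the explicit formula of \cite{RiWe1}: it instead verifies the characterizing normalization by restricting $\Ep^{\rm loc.red}(\pa^{\min}_{2n,n})$ to the torus fixed points $y_i := x_{\sigma(i)}$ and showing the result is $1$ for $\sigma=\id$ and $0$ otherwise; this restriction calculation is short and cleaner than a full formula comparison. Finally, your appeal to ``uniqueness from Theorem~\ref{main}'' in the last step is misplaced: Theorem~\ref{main} asserts that $\Ep(\pa)$ is well-defined independently of the presentation of the link pattern, which is not the statement that a function satisfying the R-recursion with a given initial condition is unique. The uniqueness you actually need is the standard fact (used implicitly in the paper) that an element of localized equivariant elliptic cohomology is determined by its fixed-point restrictions, together with the R-recursion determining all classes from the minimal one.
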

\begin{proof} We check the R-matrix recursion \eqref{Rrecursion} for 
$\Ep^{\rm red}(\pa)$. 
By \eqref{eppadef}  we have
$$\Ep(s_i\pa)=\Cc^\ulu_i(\Ep(\pa))$$
whenever $i<2n-1$  and $s_i$ enlarges the dimensions of the corresponding $\B$-orbit.  
The function 
$eu^{ell}_M$
is symmetric with respect to permutation of $x_i$ variables and 
$eu^{ell}_{\Fl} \cdot \mathcal B$ does not depend on $x_i$ variables at all. 
Hence
$\Ep^{\rm red}(\pa)$ satisfies the recursion
$$\Ep^{\rm red}(s_i\pa)=\Cc^\ulu_i(\Ep^{\rm red}(\pa))$$
whenever $i<n$  and $\ell(s_iw_\pa)=\ell(w_\pa)+1$. 
We calculate the value of the admissible coefficient $\nu$ for the operation
$\Cc^\ulu_i=\Cc^\nu_i$. It is equal to the quotient of the labels of arrows pointing to $i$-th and $(i+1)$-th node, that is 
$$\frac{w^{-1}_\pa(\mu_i)}{w^{-1}_\pa(\mu_{i+1})}\,.$$
We  exchange the variables $\mu_i$ to $\mu_i^{-1}$ and obtain exactly  the R-matrix recursion \eqref{Rrecursion}. 

\medskip

It remains to examine  the restriction of  
$\Ep^{\rm red}(\pa^{\min}_{2n,n})$ to the fixed points of the flag variety. The torus fixed points in the flag variety are identified with the permutations $\sigma\in\Spe_n$. The restriction to the fixed point $\sigma$ is a function depending only on $x_i$ and $\mu_i$. It is obtained by the substitution $y_i:=x_{\sigma(i)}$ for $1\leq i\leq n$. 
We have to show that 
$$\Ep^{\rm red}(\pa^{\min}_{2n,n})_{|\{y_i:=x_{\sigma(i)}\}}=\begin{cases}1&\text{ if }\sigma=\id\\ 0&\text{ if }\sigma\neq\id\,.\end{cases}$$
We expand
\begin{align*}\Ep^{\rm red}(\pa^{\min}_{2n,n})&
=\frac
{\prod_{i=1}^{n}\prod_{j=1}^{n}\vt\big(\tfrac {x_i}{y_j}\big)\cdot \prod_{i=1}^n\delta\big(\tfrac{x_i}{y_i},\mu_i)\cdot \prod_{i<j}\delta\big(\tfrac{x_i}{y_j},h)}
{\prod_{i<j}\tfrac{\vt\left(\tfrac {y_i}{y_j}h\right)}{\vt(h)}\cdot \prod_{i>j}\vt\big(\tfrac {y_i}{y_j}\big)}\\
&=\frac
{\prod_{i=1}^{n}\prod_{j=1}^{n}\vt\big(\tfrac {x_i}{y_j}\big)\cdot
\prod_{i=1}^n\tfrac{\vt\big(\tfrac{x_i}{y_i}\mu_i\big)}{\vt\big(\tfrac{x_i}{y_i}\big)\vt(\mu_i)}
\cdot \prod_{i<j}\tfrac{\vt\big(\tfrac{x_i}{y_j}h\big)}{\vt\big(\tfrac{x_i}{y_j}\big)\vt(h)}}
{\prod_{i<j}\tfrac{\vt\left(\tfrac {y_i}{y_j}h\right)}{\vt(h)}
\cdot
\prod_{i>j}\vt\big(\tfrac {y_i}{y_j}\big)}
\\
&=\prod_{i>j}\frac{\vt\big(\tfrac {x_i}{y_j}\big)}{\vt\big(\tfrac {y_i}{y_j}\big)}
\cdot
\prod_{i=1}^n\frac{\vt\big(\tfrac{x_i}{y_i}\mu_i\big)}{\vt\big(\mu_i\big)}
\cdot \prod_{i<j}\frac{\vt\big(\tfrac{x_i}{y_j}h\big)}{\vt\left(\tfrac {y_i}{y_j}h\right)}
\,.
\end{align*}
The expression above is equal to 1 after substitution  $y_i:=x_i$. For other substitutions the first factor specializes to zero.
\medskip
\end{proof}

\begin{remark}\rm The Bott-Samelson recursion \eqref{BSrecursion}  follows directly from 
$$\Cc^\ulu_{n+i}(\Ep(\pa))=\Ep(s_{n+i}\pa),\qquad \text{if }\ell(w_\pa s_i)>\ell(w_\pa)\,.$$
\end{remark}

\section{Elliptic weight function}
The elliptic weight function of  \cite{RTV} in the form given in \cite[\S 6]{RiWe1} can be identified with the elliptic class of link patterns.
It is a function in $n$ parameters $z_i$ and $n-1$ parameters $\gamma_i$. Set
$$x_i=z_i\,,\text{ for } 1\leq i\leq n\text{ and }x_{j+n}=\gamma_j\,,\text{ for } 1\leq j\leq n-1\,.$$
Consider link patterns with $m=2n-1$ nodes and the rank $r=n-1$, such that the arrows have sources  in the last $n-1$ nodes. Such link patterns parameterize the orbits which are contained in the upper-right $n\times (n-1)$-rectangle. Each orbit coincides with an orbit with respect to $\B_n\times \B_{n-1}$ acting on $\Hom(\C^{n-1},\C^n)$. Since the considered link patterns have $n-1$ arrows, the orbits are of maximal rank, i.e. are contained in the set of injective maps 
$\Hom^{\rm inj}(\C^{n-1},\C^n)$. The quotient $\Hom^{\rm inj}(\C^{n-1},\C^n)/\B_{n-1}$ is just the flag variety $\Fl_n$ and the $\B_n\times \B_{n-1}$-orbits are mapped by the quotient map to the Schubert cells. As in the previous section we  multiply  the elliptic classes of link patterns by the elliptic class of the matrix block
$$eu^{ell}_{M'}=\prod_{i=1}^n\prod_{j=1}^{n-1}\vt\big(\tfrac {z_i}{\gamma_j}\big)$$
and divide by the Chern class of the unipotent part of $\B_{n-1}$
$$\mathcal B'=\prod_{n>i>j\geq 1}\tfrac{\vt\left(\tfrac {\gamma_i}{\gamma_j}h\right)}{\vt(h)}\,.$$
The resulting quotients 
\begin{equation}\label{wagowe}\frac{\Ep(\pa)\cdot eu^{ell}_{M'}}{\mathcal B'}\end{equation}
satisfies R-matrix recursion \eqref{Rrecursion} and for $\pa^{\min}_{2n-1,n-1}$ the restrictions to the fixed points of $\Fl_n$ are equal to 0 for $\sigma\neq \id$
and the elliptic Euler class for $\sigma=\id$. It is an exercise to check that the functions agree before restricting, provided that we introduce a substitution as below: 
\begin{corollary} The considered quotient \eqref{wagowe}
is equal to the elliptic weight function of \cite[\S6]{RiWe1} provided that we substitute \begin{equation}\label{RTVsubs}\mu_i:=\frac{h\mu_n}{\mu_i}\,, \text{ for }1\leq i<n\,.\end{equation}
\end{corollary}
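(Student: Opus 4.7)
My plan is to mimic the strategy used in the preceding theorem for Schubert classes. The elliptic weight function of \cite[\S6]{RiWe1} is characterized by two properties: (a) it satisfies the R-matrix recursion \eqref{Rrecursion}, and (b) for the minimal pattern $\pa^{\min}_{2n-1,n-1}$ its restrictions to the torus fixed points of $\Fl_n$ vanish off the identity and give the equivariant elliptic Euler class at the identity. I will verify that, after the substitution \eqref{RTVsubs}, the quotient \eqref{wagowe} shares both properties.

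\textbf{Step 1: R-matrix recursion.} The factor $eu^{ell}_{M'}=\prod_{i,j}\vt(z_i/\gamma_j)$ is symmetric in the $z_i$-variables, and $\mathcal B'$ does not involve them. Since the operator $\Cc^\ulu_i$ in \eqref{Rrecursion} acts (for $i<n$) only on the $z_i$-variables, the multiplication/division by these factors commutes with $\Cc^\ulu_i$, so \eqref{wagowe} inherits the recursion from $\Ep(\pa)$:
\[
\frac{\Ep(s_i\pa)\,eu^{ell}_{M'}}{\mathcal B'}
=\Cc_i^\ulu\!\left(\frac{\Ep(\pa)\,eu^{ell}_{M'}}{\mathcal B'}\right),
\qquad i<n,\ \ell(s_iw_\pa)=\ell(w_\pa)+1.
\]
By Lemma \ref{wzor_na_nu} the admissible $\ulu$ is the quotient of the two arrow labels incident to the nodes $i$ and $i{+}1$. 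The point of \eqref{RTVsubs} is that this ratio matches, after the indicated change of variables, the R-matrix coefficient of the weight function in \cite[\S6.1]{RiWe1}.

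\textbf{Step 2: boundary values.} For $\pa=\pa^{\min}_{2n-1,n-1}$ the class $\Ep(\pa)$ is given explicitly by \eqref{poczatek}. Restricting \eqref{wagowe} at a torus fixed point $\sigma\in\Spe_n$ via $\gamma_j:=z_{\sigma(j)}$ and expanding $\deltm$'s as in the proof of the preceding theorem, the numerator factors $\vt(z_i/\gamma_j)$ of $eu^{ell}_{M'}$ cancel the denominators of $\deltm$, and the surviving product vanishes unless $\sigma=\id$, in which case it reproduces the equivariant elliptic Euler class of $T_{\id}\Fl_n$. These are exactly the initial values of the weight function after \eqref{RTVsubs}, as recorded in \cite[\S6]{RiWe1}.

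\textbf{Conclusion and main obstacle.} Both sides of the claimed identity are uniquely determined by the recursion together with the base-point restrictions, hence Steps 1--2 force their equality. The recursive argument is essentially formal; the only genuinely new verification --- and in my view the main obstacle --- is the combinatorial matching under \eqref{RTVsubs}. One must check that the substitution $\mu_i\mapsto h\mu_n/\mu_i$ for $i<n$ (with $\mu_n$ left untouched) converts our parameter $\ulu=\mu_{w_\pa^{-1}(i)}/\mu_{w_\pa^{-1}(i+1)}$ into the R-matrix coefficient written in \cite[\S6.1]{RiWe1}. This asymmetry reflects the distinguished role of the ``last'' parameter in the RTV normalization versus our convention $\mu_i=h^{1-\lambda_i}$ arising from boundary divisor coefficients on $X^{\min}_{m,r}$.
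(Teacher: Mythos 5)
Your proposal is essentially the paper's approach: the paragraph immediately preceding the corollary records that \eqref{wagowe} satisfies the R-matrix recursion and has the correct fixed-point restrictions for $\pa^{\min}_{2n-1,n-1}$, and the paper then leaves the formula-level verification (illustrated by the $n=3$ example) as an exercise. Your uniqueness framing and the obstacle you single out --- checking that the substitution \eqref{RTVsubs} converts the admissible parameter $\mu_{w_\pa^{-1}(i)}/\mu_{w_\pa^{-1}(i+1)}$ into the R-matrix coefficient of \cite[\S6.1]{RiWe1} --- are precisely the content of that exercise.
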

\begin{example}\rm (Compare \cite[Example 6.2]{RiWe1}.) Let $n=3$, $m=5$, $r=2$. Then 
$$\Ep(\pa^{\min}_{5,2})=\delta\big(\tfrac {z_1}{\gamma_1},\mu_1\big)\delta\big(\tfrac {z_2}{\gamma_2},\mu_2\big)\delta\big(\tfrac {z_1}{\gamma_2},h\big)\,,$$
$$\frac{\Ep(\pa^{\min}_{5,2})\cdot eu^{ell}_{M'}}{\mathcal B'}
=\vt\big(\tfrac{z_2}{\gamma_1}\big)\,
\vt\big(\tfrac{z_3}{\gamma_1}\big)\,
\vt\big(\tfrac{z_3}{\gamma_2}\big)\,
\frac{\vt\big(\tfrac {z_1}{\gamma_2}h\big)}{\vt\big(\tfrac {\gamma_1}{\gamma_2}h\big)}\;
\frac{\vt\big(\tfrac {z_1}{\gamma_1}\mu_1\big)}
{\vt\big(\mu_1\big)}\;
\frac{\vt\big(\tfrac {z_2}{\gamma_2}\mu_2\big)}
{\vt\big(\mu_2\big)}\,.
$$
After the substitution \eqref{RTVsubs} we obtain $\hat{\bf  w}_{123}$ of {\it loc.cit.}
\end{example}
\section{Geometric meaning of $\Ep(\pa)$}
In the whole paper we have avoided to use directly the construction of elliptic characteristic classes as defined by Borisov and Libgober \cite{BoLi}, except for the computation for $\Ep(X^{\min}_{m,r})$. The starting case was trivial from the geometric point of view, and further we did not have to compute the boundary divisor discrepancies of the resolution maps  \eqref{BePerez}. We applied the principle {} that the elliptic class is pure. 
We expect that elliptic classes  represent sections of line bundles over a product of elliptic curves. This expectation is supported by explicit constructions of elliptic classes via stable envelopes; see \cite{RTV, RSZV, Smirnov, RiWe1}. The fact that the elliptic classes of Schubert varieties (after restrictions to the fixed points)  are pure is checked in \cite[\S8]{RiWe1}, however the name `pure'  was not introduced.
An axiomatic definition of the stable envelope  was given in \cite{AgOk}, where such classes are defined as homomorphisms of line bundles. It is widely conjectured that elliptic stable envelopes exist in great generality. Further examples, { for} the bow varieties, were constructed by Botta and Rimányi \cite{BottaRi}. Taken together, these results suggest that elliptic characteristic classes should be expressible as pure combinations of theta functions.

\medskip

Although our setting falls outside the formalism of stable envelopes, we  control of the coefficients $\nu_i$, so as to obtain a pure combination of $\delta$-functions. We show that this can indeed be achieved, and that the resulting class is independent of the choice of reduced word representing a given permutation.

We can easily read the multiplicities of the divisors. 
Suppose $\pa=s_{i_1}s_{i_2}\dots s_{i_\ell}\pa^{\min}_{m,r}$ is a reduced presentation of a link pattern.
Let $$\pi:Z=Z\strut_{m,r}^{\underline w}=P_{i_1}\times_\B  P_{i_2}\times_\B \dots \times_\B  P_{i_\ell}\times_\B  X^{\id}_{m,r}\to X^w_{m,r}$$
be the associated resolution and let $$Z'=P_{i_2}\times_\B  P_{i_3}\times_\B \dots \times_\B  P_{i_\ell}\times_\B  X^{\id}_{m,r}\,.$$
We have a fibration $Z=P_{i_1}\times_\B  Z'\to \PP^1=P_{i_1}/\B$ with the fiber $Z'$. The fiber over $\id \,\B$ is a component of the boundary divisor $\partial Z$, other components are obtained from the components of $\partial Z'$ by application of the associated bundle construction $P_{i_1}\times_\B  -$.
We define  the coefficients of the boundary of the resolution inductively. We start with the multiplicities $\lambda_i$ attached to the components of $\bigcup_{i=1}^mD_i=\partial X^{\min}_{m,r}$. Suppose the multiplicities $\alpha_i'$ of $\partial_i Z'$ are defined and we do not change them applying the associated bundle construction, only shifting the indices by 1.
It remains to define the coefficient $\alpha_1$ of the component $\B\times_\B  Z'\subset \partial Z$. Its value is determined by the admissible parameter $\nu$ of
$\Cc^\ulu_{i_1}=\Cc^\nu_{i_1}$.
We illustrate this with an example.

\begin{example}\rm Let 
$$\pa=s_1s_2\pa^{\min}_{3,1}=\xymatrix@-1.5pc{	
\tfrac{h^3}\alpha\ar@{..}[r]\ar@{->}@/^1.5pc/[r]&
\alpha h\ar@{..}[r]&
h^2}\,.$$
The link pattern $\pa$ represents the orbit of the matrix $\begin{pmatrix} 0&0&0\\1&0&0\\0&0&0\end{pmatrix}$.
The resolution (in one of the maps) has the form
$$(x,y,z)\mapsto 
\begin{pmatrix} 1&0&0\\z&1&0\\0&0&1\end{pmatrix}
\begin{pmatrix} 1&0&0\\0&1&0\\0&y&1\end{pmatrix}\cdot
\begin{pmatrix} 0&0&x\\0&0&0\\0&0&0\end{pmatrix}=\begin{pmatrix}  x y z & -x y & x \\
 x y z^2 & -x y z & x z \\
 0 & 0 & 0\end{pmatrix}\,.$$
Here the boundary divisors are the following:
$$\widetilde D_1=\{x=0\}\,,$$
$$\partial_2Z=\{y=0\}\,,$$
$$\partial_1 Z=\{z=0\}\,.$$
We set the multiplicity of $\widetilde D_1$ to be $\lambda_1=\lambda$ and 
$\mu=h^{1-\lambda}$, which in the additive notation means $\mu=(1-\lambda)h$.
The multiplicities of $\partial_i Z$ are dictated by the diagram
$$\begin{matrix}\\[30pt]s_2\\[35pt]s_1\\[20pt]\end{matrix}\hskip20pt\begin{tikzcd}[nodes={inner sep=0pt}] 
  &\arrow[-,dashed]{dd}\\ \\
  \boxed{\mu h}\arrow[-]{d}
    &\boxed{ h^2}\arrow[-,dashed]{dr}{\hskip-4pt\frac{\mu}{h}}
    & \boxed{\tfrac {h^3}{\mu}} \arrow[-]{dl}\arrow[bend right=60,swap]{ll}{\hskip-10pt\mu}
        \\
\boxed{   \mu h} \arrow[-]{dr} {\hskip-7pt\frac{\mu^2}{h^2}}
    & \boxed{\tfrac {h^3}\mu}\arrow[-]{dl} 
    & \boxed{h^2}\arrow[-,dashed]{d}
    \\
\boxed{      \tfrac{h^3}\mu}& \boxed{\mu h }&\boxed{ h^2}  \\
\end{tikzcd}\hskip30pt
\begin{matrix}\\[20pt]\frac{h^2}{h^3/\mu}=\frac{\mu}h\\[23pt]\frac{\mu h}{h^3/\mu}=\frac{\mu^2}{h^2}\\[20pt]\end{matrix}
$$
The label over the upper crossing  
is 
$$\frac \mu h =\frac{h^{1-\lambda}}h=h^{-\lambda}\,.$$
This tells us that the multliplicity $\alpha_2$ has to satisfy
$$h^{1-\alpha_2}=h^{-\lambda}\,.$$
The bottom label is $$\frac{\mu^2}{h^2}=\frac{h^{2(1-\lambda)}}{h^2}=h^{-2\lambda}$$
implies 
$$1-\alpha_1=-2\lambda\,.$$
Finally we find the coefficients of the boundary divisor in the Bott-Samelson resolution:
$$\alpha_1=2\lambda+1\,,\qquad \alpha_2=\lambda+1\,.$$

\end{example}
We will not write the general formula for multiplicities. It is just a simple application of the inductive procedure of determining the admissible value of the parameter in Demazure operation.



\end{document}